\newcommand{\email}[1]{\href{mailto:#1}{\nolinkurl{#1}}}
\newlength{\mySubFigSize}
\definecolor{labelkey}{rgb}{0,0.08,0.45}
\definecolor{refkey}{rgb}{0,0.6,0.0}
\definecolor{Brown}{rgb}{0.45,0.0,0.05}
\definecolor{dgreen}{rgb}{0.00,0.49,0.00}
\definecolor{dblue}{rgb}{0,0.08,0.75}
\renewcommand{\leq}{\ensuremath{\leqslant}}
\renewcommand{\geq}{\ensuremath{\geqslant}}
\newcommand{\minimize}[2]{\ensuremath{\underset{\substack{{#1}}}%
{\text{minimize}}\;\;#2 }}
\newcommand{\Frac}[2]{\displaystyle{\frac{#1}{#2}}} 
\newcommand{\scal}[2]{{\langle{{#1}\mid{#2}}\rangle}}
\newcommand{\menge}[2]{\big\{{#1}~\big |~{#2}\big\}}
\newcommand{\HH}{\ensuremath{{\mathcal H}}}
\newcommand{\GG}{\ensuremath{{\mathcal G}}}
\newcommand{\KK}{\ensuremath{{\mathcal K}}}
\newcommand{\emp}{\ensuremath{{\varnothing}}}
\newcommand{\lev}[1]{{\ensuremath{{{{\operatorname{lev}}}_{\leq #1}}\,}}}
\newcommand{\Id}{\ensuremath{\operatorname{Id}}\,}
\newcommand{\RR}{\ensuremath{\mathbb{R}}}
\newcommand{\RP}{\ensuremath{\left[0,+\infty\right[}}
\newcommand{\RM}{\ensuremath{\left]-\infty,0\right]}}
\newcommand{\RPP}{\ensuremath{\left]0,+\infty\right[}}
\newcommand{\RPX}{\ensuremath{\left[0,+\infty\right]}}
\newcommand{\RX}{\ensuremath{\left]-\infty,+\infty\right]}}
\newcommand{\RXX}{\ensuremath{\left[-\infty,+\infty\right]}}
\newcommand{\NN}{\ensuremath{\mathbb N}}
\newcommand{\exi}{\ensuremath{\exists\,}}
\newcommand{\pinf}{\ensuremath{{+\infty}}}
\newcommand{\minf}{\ensuremath{{-\infty}}}
\newcommand{\dom}{\ensuremath{\text{\rm dom}\,}}
\newcommand{\rec}{\ensuremath{\text{\rm rec}\,}}
\newcommand{\prox}{\ensuremath{\text{\rm prox}}}
\newcommand{\sign}{\ensuremath{\text{\rm sign}}}
\newcommand{\cone}{\ensuremath{\text{\rm cone}\,}}
\newcommand{\infconv}{\ensuremath{\mbox{\small$\,\square\,$}}}
\newtheorem{theorem}{Theorem}[section]
\newtheorem{lemma}[theorem]{Lemma}
\newtheorem{corollary}[theorem]{Corollary}
\newtheorem{proposition}[theorem]{Proposition}
\theoremstyle{plain}{\theorembodyfont{\rmfamily}%
}
\theoremstyle{plain}{\theorembodyfont{\rmfamily}%
\newtheorem{example}[theorem]{Example}}
\theoremstyle{plain}{\theorembodyfont{\rmfamily}%
\newtheorem{remark}[theorem]{Remark}}
\theoremstyle{plain}{\theorembodyfont{\rmfamily}%
}
\theoremstyle{plain}{\theorembodyfont{\rmfamily}%
}
\theoremstyle{plain}{\theorembodyfont{\rmfamily}%
}
\theoremstyle{plain}{\theorembodyfont{\rmfamily}%
\newtheorem{definition}[theorem]{Definition}}
\theoremstyle{plain}{\theorembodyfont{\rmfamily}%
}
\numberwithin{equation}{section}
\begin{document}

\title{\sffamily\LARGE Perspective Functions: 
Proximal Calculus and Applications in High-Dimensional 
Statistics\footnote{Contact author: 
P. L. Combettes, \email{plc@math.ncsu.edu},
phone: +1 (919) 515 2671.}}

\author{Patrick L. Combettes$^1$ and Christian L. M\"uller$^2$\\
\small
\small $\!^1$North Carolina State University\\
\small Department of Mathematics\\
\small Raleigh, NC 27695-8205, USA\\
\small \email{plc@math.ncsu.edu}\\
\small \vskip 2mm
\small $\!^2$Flatiron Institute\\
\small Simons Foundation\\
\small New York, NY 10010, USA\\
\small \email{cmueller@simonsfoundation.org}
}

\date{~}

\maketitle

\vskip 8mm

\begin{abstract}
Perspective functions arise explicitly or implicitly in various
forms in applied mathematics and in statistical data analysis. 
To date, no systematic strategy is available to solve the
associated, typically nonsmooth, optimization problems. In this
paper, we fill this gap by showing that proximal methods provide 
an efficient framework to model and solve problems involving
perspective functions. We study the construction of the proximity
operator of a perspective function under general assumptions and
present important instances in which the proximity operator can be
computed explicitly or via straightforward numerical operations.
These results constitute central building blocks in the design of
proximal optimization algorithms. We showcase the versatility of
the framework by designing novel proximal algorithms for
state-of-the-art regression and variable selection schemes in
high-dimensional statistics. 
\end{abstract}

\newpage
\section{Introduction}
Perspective functions appear, often implicitly, in various problems
in areas as diverse as statistics, control, computer vision,
mechanics, game theory,
information theory, signal recovery, transportation theory, machine
learning, disjunctive optimization, and physics (see the companion
paper \cite{Scda16a} for a detailed account). In the setting of a
real Hilbert space $\GG$, the most useful form of a perspective
function, first investigated in Euclidean spaces in \cite{Rock70},
is the following.

\begin{definition}
\label{d:perspective}
Let $\varphi\colon\GG\to\RX$ be a proper lower semicontinuous
convex function and let $\rec\varphi$ be its 
recession function. The perspective of $\varphi$ is
\begin{equation}
\label{eevU74gdnx-08l}
\widetilde{\varphi}\colon\RR\times\GG\to\RX\colon 
(\eta,y)\mapsto
\begin{cases}
\eta\varphi(y/\eta),&\text{if}\;\;\eta>0;\\
(\rec\varphi)(y),&\text{if}\;\;\eta=0;\\
\pinf,&\text{if}\;\;\eta<0.
\end{cases}
\end{equation}
\end{definition}

Many scientific problems result in minimization problems that
involve perspective functions.  In statistics, a prominent instance
is the modeling of data via ``maximum likelihood-type" estimation
(or M-estimation) with a so-called concomitant parameter
\cite{Huber1981}. In this context, $\varphi$ is a likelihood
function, $\eta$ takes the role of the concomitant parameter, e.g.,
an unknown scale or location of the assumed parametric
distribution, and $y$ comprises unknown regression coefficients.
The statistical problem is then to simultaneously estimate the
concomitant variable and the regression vector from data via
optimization. Another important example in statistics 
\cite{Fish25}, signal recovery \cite{Borw96}, and physics 
\cite{Frie07} is the Fisher information of a function 
$x\colon\RR^N\to\RPP$, namely
\begin{equation}
\label{e:fisher}
\int_{\RR^N}\frac{\|\nabla x(t)\|_2^2}{x(t)}dt,
\end{equation}
which hinges on the perspective function of the squared Euclidean
norm (see \cite{Scda16a} for further discussion). 

In the literature, problems involving perspective functions are 
typically solved with a wide range of ad-hoc methods. Despite the
ubiquity of perspective functions, no systematic structuring 
framework has been available to approach these problems. 
The goal of this paper is to fill this gap by showing that
they are amenable to solution by proximal methods, which offer a
broad array of splitting algorithms to solve complex nonsmooth
problems with attractive convergence guarantees 
\cite{Livre1,MaPr16,Siop15,Facc03}. The central element in the
successful implementation of a proximal algorithm is the ability to
compute the proximity operator of the functions present in the
optimization problem. We therefore propose a systematic
investigation of proximity operators for perspective functions
and show that the proximal
framework can efficiently solve perspective-function based
problems, unveiling in particular new applications in
high-dimensional statistics. 

In Section~\ref{sec:2}, we introduce basic concepts from convex
analysis and review essential properties of perspective function.
We then study the proximity operator of perspective functions in
Sections~\ref{sec:3}.  We establish a characterization of the
proximity operator and then provide examples of computation for
concrete instances. Section~\ref{sec:4} unveils new applications of
perspective functions in high-dimensional statistics and
demonstrates the flexibility and potency of the proposed framework
to both model and solve complex problems in statistical data
analysis.  

\section{Notation and background}
\label{sec:2}

\subsection{Notation and elements of convex analysis}
Throughout, $\HH$, $\GG$, and $\KK$ are real Hilbert spaces and  
$\HH\oplus\GG$ denotes their Hilbert direct sum. The symbol
$\|\cdot\|$ denotes the norm of a Hilbert space and 
$\scal{\cdot}{\cdot}$ the associated scalar product.
The closed ball with center $x\in\KK$ and radius $\rho\in\RPP$ is
denoted by $B(x;\rho)$.

A function $f\colon\KK\to\RX$ is proper if 
$\dom f=\menge{x\in\KK}{f(x)<\pinf}\neq\emp$,
coercive if $\lim_{\|x\|\to\pinf}f(x)=\pinf$, and supercoercive
if $\lim_{\|x\|\to\pinf}f(x)/\|x\|=\pinf$.
Denote by $\Gamma_0(\KK)$ the class of proper lower 
semicontinuous convex functions from $\KK$ to $\RX$, and let 
$f\in\Gamma_0(\KK)$. The conjugate of $f$ is the function 
\begin{equation}
\label{e:conj}
f^*\colon\KK\to\RXX\colon u\mapsto\bigg(\sup_{x\in\KK}
\scal{x}{u}-f(x)\bigg).
\end{equation}
It also belongs to $\Gamma_0(\KK)$ and $f^{**}=f$.
The subdifferential of $f$ is the set-valued operator
\begin{equation}
\label{e:subdiff}
\partial f\colon\KK\to 2^\KK\colon x\mapsto 
\menge{u\in\KK}{(\forall y\in\dom f)\;\:
\scal{y-x}{u}+f(x)\leq f(y)}.
\end{equation}
We have
\begin{equation}
\label{e:subdiff2}
(\forall x\in\KK)(\forall u\in\KK)\quad
u\in\partial f(x)\quad\Leftrightarrow\quad
x\in\partial f^*(u).
\end{equation}
Moreover,
\begin{equation}
\label{e:subdiff11}
(\forall x\in\KK)(\forall u\in\KK)\quad
f(x)+f^*(u)\geq\scal{x}{u}
\end{equation}
and 
\begin{equation}
\label{e:subdiff12}
(\forall x\in\KK)(\forall u\in\KK)\quad
u\in\partial f(x)\quad\Leftrightarrow\quad
f(x)+f^*(u)=\scal{x}{u}.
\end{equation}
If $f$ is G\^ateaux differentiable at 
$x\in\dom f$ with gradient $\nabla f(x)$, then 
\begin{equation}
\label{e:2384572k}
\partial f(x)=\{\nabla f(x)\}. 
\end{equation}
Let $z\in\dom f$. The recession function of $f$ is
\begin{equation}
\label{e4RJ3hjhhjeTr6p9-23a}
(\forall y\in\KK)\quad(\rec f)(y)=
\sup_{x\in\dom f}\big(f(x+y)-f(y)\big)=\lim_{\alpha\to\pinf}
\frac{f(z+\alpha y)}{\alpha}.
\end{equation}
The infimal convolution operation is denoted by $\infconv$.
Now let $C$ be a subset of $\KK$. Then 
\begin{equation}
\label{e:iota}
\iota_C\colon\KK\to\{0,\pinf\}\colon x\mapsto
\begin{cases}
0,&\text{if}\;\;x\in C;\\
\pinf,&\text{if}\;\;x\notin C
\end{cases}
\end{equation}
is the indicator function of $C$, 
\begin{equation}
\label{e:dictC}
d_C\colon\KK\to\RPX\colon x\mapsto\inf\|C-x\|
\end{equation}
is the distance function to $C$, and 
\begin{equation}
\label{e:support}
\sigma_C=\iota_C^*\colon\KK\to\RXX\colon u
\mapsto\sup_{x\in C}\scal{x}{u}
\end{equation}
is the support function of $C$. 
If $C$ is nonempty, closed, and convex then, for every 
$x\in\KK$, there exists a unique point $P_Cx\in C$, called 
the projection of $x$ onto $C$, such that $\|x-P_Cx\|=d_C(x)$. 
We have
\begin{equation}
\label{e:kolmogorov}
(\forall x\in\KK)(\forall p\in\KK)\quad
p=P_Cx\quad\Leftrightarrow\quad\big[\,p\in C\quad\text{and}\quad
(\forall y\in C)\quad\scal{y-p}{x-p}\leq 0\,\big].
\end{equation}
The normal cone to $C$ is 
\begin{equation}
\label{e:nc}
N_C=\partial\iota_C\colon\KK\to 2^\KK\colon x\mapsto 
\begin{cases}
\menge{u\in\KK}{\sup\scal{C-x}{u}\leq 0},
&\text{if}\;\;x\in C;\\
\emp,&\text{otherwise.}
\end{cases}
\end{equation}
For further background on convex analysis, see 
\cite{Livre1,Rock70}.

\subsection{Proximity operators}

The proximity operator of $f\in\Gamma_0(\KK)$ is 
\begin{equation}
\label{e:jjm1962}
\prox_f\colon\KK\to\KK\colon x\mapsto
\underset{y\in\KK}{\text{argmin}}
\bigg(f(y)+\frac{1}{2}\|x-y\|^2\bigg).
\end{equation}
This operator was introduced by Moreau in 1962 \cite{Mor62b} to
model problems in unilateral mechanics. In \cite{Smms05}, it was
shown to play an important role in the investigation of various
data processing problems, and it has become increasingly prominent
in the general area of data analysis \cite{Banf11,Wrig12}. We
review basic properties and refer the reader to \cite{Livre1} for a
more complete account. 

Let $f\in\Gamma_0(\KK)$. Then
\begin{equation}
\label{e:jjm}
(\forall x\in\KK)(\forall p\in\KK)\quad
p=\prox_{f}x\quad\Leftrightarrow\quad x-p\in\partial f(p).
\end{equation}
If $C$ is a nonempty closed convex subset of $\KK$, then 
\begin{equation}
\label{e:jjm4}
\prox_{f}=P_C.
\end{equation}
Let $\gamma\in\RPP$. The Moreau decomposition of 
$x\in\KK$ is
\begin{equation}
\label{e:jjm3}
x=\prox_{\gamma f}x+\gamma\prox_{f^*/\gamma}(x/\gamma).
\end{equation}
The following facts will also be needed.

\begin{lemma}
\label{lkIjiiIus3W4-17}
Let $(\Omega,{\EuScript F},\mu)$ be a complete $\sigma$-finite 
measure space, let $\mathsf{K}$ be a separable real Hilbert 
space, and let $\psi\in\Gamma_0({\mathsf K})$. 
Suppose that $\KK=L^2((\Omega,{\EuScript F},\mu);{\mathsf K})$ 
and that $\mu(\Omega)<\pinf$ or $\psi\geq\psi(0)=0$. Set 
\begin{equation}
\label{ekIjiiIus3W4-17A}
\begin{array}{lcll}
\Phi\colon&\KK&\to&\RX\\[2mm]
&x&\mapsto&
\begin{cases}
{\displaystyle\int_\Omega}\psi\big(x(\omega)\big)\mu(d\omega),
&\;\;\text{if}\;\;\psi\circ x\in 
L^1\big((\Omega,{\EuScript F},\mu);\RR\big);\\
\pinf,&\;\;\text{otherwise.}
\end{cases}
\end{array}
\end{equation}
Let $x\in\KK$ and define, for $\mu$-almost every
$\omega\in\Omega$, $p(\omega)=\prox_{\psi} x(\omega)$.
Then $p=\prox_{\Phi}x$.
\end{lemma}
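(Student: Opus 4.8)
The plan is to verify directly from \eqref{e:jjm1962} that the function $p$ defined pointwise by $p(\omega)=\prox_\psi x(\omega)$ is the unique minimizer of $y\mapsto\Phi(y)+\frac12\|x-y\|^2$ over $\KK$. This amounts to three things: that $p$ is a genuine element of $\KK$, that $\Phi(p)<\pinf$, and that $\Phi(y)+\frac12\|x-y\|^2\geq\Phi(p)+\frac12\|x-p\|^2$ for every $y\in\KK$.

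\textbf{Step 1: $p\in\KK$.} Since $\prox_\psi$ is nonexpansive, hence continuous, and $\mathsf K$ is separable, $p=\prox_\psi\circ x$ is measurable. If $\psi\geq\psi(0)=0$, then $y\mapsto\psi(y)+\frac12\|y\|^2$ is minimized uniquely at $0$, so $\prox_\psi 0=0$ and nonexpansiveness gives $\|p(\omega)\|\leq\|x(\omega)\|$ for $\mu$-a.e.\ $\omega$; if instead $\mu(\Omega)<\pinf$, then $\|p(\omega)\|\leq\|x(\omega)\|+\|\prox_\psi 0\|$ a.e., and the constant function $\|\prox_\psi 0\|$ lies in $\KK$ because $\mu$ is finite. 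In either case $p\in\KK$.

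\textbf{Step 2: $\Phi(p)<\pinf$.} Fix $z\in\dom\partial\psi$ and $v\in\partial\psi(z)$ (with $z=0$, $v=0$ admissible when $\psi\geq\psi(0)=0$). By \eqref{e:jjm}, $x(\omega)-p(\omega)\in\partial\psi\big(p(\omega)\big)$ for a.e.\ $\omega$, so the subgradient inequality \eqref{e:subdiff}, applied at $p(\omega)$ with subgradient $x(\omega)-p(\omega)$ and at $z$ with subgradient $v$, yields, a.e.,
\begin{equation*}
\psi(z)-\|p(\omega)-z\|\,\|v\|\leq\psi\big(p(\omega)\big)\leq
\psi(z)+\|p(\omega)-z\|\,\|x(\omega)-p(\omega)\|.
\end{equation*}
Both bounding functions are $\mu$-integrable: $\|p-z\|$ and $\|x-p\|$ lie in $L^2$ (with $z$ viewed as a constant, which is in $L^2$ when $\mu(\Omega)<\pinf$, and with $z=0$ otherwise), so their product is in $L^1$, and the constant $\psi(z)$ is integrable (trivially when $\psi\geq 0$, and because $\mu$ is finite otherwise). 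Hence $\psi\circ p\in L^1$ and $\Phi(p)\in\RR$; in particular $\Phi$ is proper, and it is convex and lower semicontinuous by the usual integral-functional arguments, so $\Phi\in\Gamma_0(\KK)$.

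\textbf{Step 3: optimality.} For a.e.\ $\omega$, the point $p(\omega)$ minimizes $\zeta\mapsto\psi(\zeta)+\frac12\|x(\omega)-\zeta\|^2$ over $\mathsf K$, so for every $y\in\KK$ and a.e.\ $\omega$,
\begin{equation*}
\psi\big(y(\omega)\big)+\tfrac12\|x(\omega)-y(\omega)\|^2\geq
\psi\big(p(\omega)\big)+\tfrac12\|x(\omega)-p(\omega)\|^2,
\end{equation*}
which is trivial where $\psi(y(\omega))=\pinf$. The right-hand side is integrable by Step~2. If $y\notin\dom\Phi$ then $\Phi(y)=\pinf$ and the desired inequality holds trivially; otherwise, integrating the pointwise inequality over $\Omega$ gives $\Phi(y)+\frac12\|x-y\|^2\geq\Phi(p)+\frac12\|x-p\|^2$. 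Thus $p\in\dom\Phi$ attains the infimum of the strictly convex functional $y\mapsto\Phi(y)+\frac12\|x-y\|^2$, so it is its unique minimizer and $p=\prox_\Phi x$.

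The main obstacle is the integrability bookkeeping in Steps~1 and~2: it is exactly there that the dichotomy ``$\mu(\Omega)<\pinf$ or $\psi\geq\psi(0)=0$'' is used, to keep $p$ inside $\KK$ and $\psi\circ p$ inside $L^1$. Once this is settled, the pointwise optimality of $\prox_\psi$ makes Step~3 immediate. A more abstract alternative would invoke the known description of the subdifferential of an integral functional --- $u\in\partial\Phi(p)$ iff $u(\omega)\in\partial\psi\big(p(\omega)\big)$ a.e.\ --- and combine it with \eqref{e:jjm}, but the direct route above avoids importing that result.
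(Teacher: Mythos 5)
Your proof is correct, but it follows a genuinely different route from the paper's. The paper's argument is three lines: it cites \cite[Proposition~9.32]{Livre1} to get $\Phi\in\Gamma_0(\KK)$ and \cite[Proposition~16.50]{Livre1} for the almost-everywhere characterization of $\partial\Phi$, and then chains these with \eqref{e:jjm} to read off $p(\omega)=\prox_\psi x(\omega)$ a.e.\ $\Leftrightarrow$ $x-p\in\partial\Phi(p)$ $\Leftrightarrow$ $p=\prox_\Phi x$ --- precisely the ``more abstract alternative'' you mention in your closing remark. Your direct verification buys self-containedness: it makes visible exactly where the hypothesis ``$\mu(\Omega)<\pinf$ or $\psi\geq\psi(0)=0$'' is consumed (keeping $p$ in $\KK$ via nonexpansiveness and $\prox_\psi 0=0$, and squeezing $\psi\circ p$ between integrable bounds via two subgradient inequalities), whereas in the paper this is hidden inside the hypotheses of the cited propositions. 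The cost is length and one spot of hand-waving: your parenthetical ``convex and lower semicontinuous by the usual integral-functional arguments'' is the one place you still lean on the integral-functional machinery, and it is needed only so that $\prox_\Phi$ as defined in \eqref{e:jjm1962} applies to $\Phi$ --- your minimization argument itself uses only properness and strict convexity of $y\mapsto\Phi(y)+\frac{1}{2}\|x-y\|^2$. All the estimates check out: measurability of $\prox_\psi\circ x$ from continuity of $\prox_\psi$ and separability of $\mathsf K$, the choice $z=0$, $v=0\in\partial\psi(0)$ in the case $\psi\geq\psi(0)=0$, and the Cauchy--Schwarz pairing of the $L^2$ factors in the case $\mu(\Omega)<\pinf$.
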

\begin{proof}
By \cite[Proposition~9.32]{Livre1}, $\Phi\in\Gamma_0(\KK)$.
Now take $x$ and $p$ in $\KK$. 
Then it follows from \eqref{e:jjm} and
\cite[Proposition~16.50]{Livre1} that
$p(\omega)=\prox_{\Phi}x(\omega)$\;$\mu$-a.e.
$\Leftrightarrow$ $x(\omega)-p(\omega)\in
\partial\psi(p(\omega))$\;$\mu$-a.e. 
$\Leftrightarrow$ $x-p\in\partial \Phi(p)$.
$\Leftrightarrow$ $p=\prox_{\Phi}x$.
\end{proof}

\begin{lemma}
\label{lQrs54tGhfnnUjH0-02}
Let $D\neq\{0\}$ be a nonempty closed convex subset of $\KK$, 
let $x\in\KK$, and let $\gamma\in\RPP$. 
Set $f=\|\cdot\|+\sigma_D$ and $C=\gamma D$. Then
\begin{equation}
\label{eQrs54tGhfnnUjH0-02g}
\prox_{\gamma f}x=
\begin{cases}
0,&\text{if}\;\;d_{C}(x)\leq\gamma;\\
\bigg(1-\dfrac{\gamma}{d_C(x)}\bigg)
\big(x-P_{C}x\big),&\text{if}\;\;d_{C}(x)>\gamma.
\end{cases}
\end{equation}
If, in addition, $D$ is a cone and $K$ denotes its polar
cone, then $f=\|\cdot\|+\iota_{K}$ and
\begin{equation}
\label{eQrs54tGhfnnUjH0-03g}
\prox_{\gamma f}x=
\begin{cases}
0,&\text{if}\;\;\|P_{K}x\|\leq\gamma;\\
\bigg(1-\dfrac{\gamma}{\|P_{K}x\|}\bigg)
P_{K}x,&\text{if}\;\;\|P_{K}x\|>\gamma.
\end{cases}
\end{equation}
\end{lemma}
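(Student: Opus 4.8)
The plan is to apply the subdifferential characterization \eqref{e:jjm}, $p=\prox_{\gamma f}x\Leftrightarrow x-p\in\partial(\gamma f)(p)$, and to verify it directly for the two candidate values of $p$. Since $\|\cdot\|$ is finite-valued and continuous on $\KK$ while $\sigma_D=\iota_D^*\in\Gamma_0(\KK)$, the subdifferential sum rule yields $\partial(\gamma f)(p)=\gamma\partial\|\cdot\|(p)+\gamma\partial\sigma_D(p)$ for every $p\in\KK$. I shall use the standard facts $\partial\|\cdot\|(0)=B(0;1)$ and $\partial\|\cdot\|(p)=\{p/\|p\|\}$ for $p\neq0$ (via \eqref{e:2384572k}), together with the observation that, $D$ being closed and convex, $\sigma_D^*=\iota_D$, so \eqref{e:support} and \eqref{e:subdiff12} give: $u\in\partial\sigma_D(p)$ if and only if $u\in D$ and $\scal{p}{u}=\sigma_D(p)$; in particular $\partial\sigma_D(0)=D$. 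Write $r=d_C(x)$; as $C=\gamma D$ is nonempty, closed and convex, $P_Cx$ is well defined and $\scal{c-P_Cx}{x-P_Cx}\le0$ for every $c\in C$ by \eqref{e:kolmogorov}.

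First I would dispose of the case $r\le\gamma$ with the candidate $p=0$: here $\gamma\partial f(0)=\gamma B(0;1)+\gamma D=B(0;\gamma)+C$, and $x=(x-P_Cx)+P_Cx$ lies in this set because $\|x-P_Cx\|=r\le\gamma$ and $P_Cx\in C$; hence $0=\prox_{\gamma f}x$. Next, for $r>\gamma$, I would verify the candidate $p=(1-\gamma/r)(x-P_Cx)$. Since $r>\gamma>0$, we have $x\neq P_Cx$ and $1-\gamma/r>0$, hence $p\neq0$, $\|p\|=r-\gamma$, and $p/\|p\|=(x-P_Cx)/r$; also $x-p=P_Cx+(\gamma/r)(x-P_Cx)$, so $(x-p)-\gamma(x-P_Cx)/r=P_Cx$. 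Thus $x-p\in\gamma\partial f(p)$ will follow once I show $P_Cx/\gamma\in\partial\sigma_D(p)$, i.e. that $P_Cx/\gamma\in D$ — immediate from $P_Cx\in C=\gamma D$ — and that $\scal{p}{P_Cx/\gamma}=\sigma_D(p)$. For the latter, $p$ is a strictly positive multiple of $x-P_Cx$, so \eqref{e:kolmogorov} gives $\scal{p}{c}\le\scal{p}{P_Cx}$ for all $c\in C$, whence $\sigma_D(p)=\sup_{c\in C}\scal{p}{c/\gamma}=\scal{p}{P_Cx/\gamma}$. This proves $p=\prox_{\gamma f}x$ and establishes \eqref{eQrs54tGhfnnUjH0-02g}.

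The conic case follows by specialization. When $D$ is a cone with polar cone $K$, its support function is $\sigma_D=\iota_K$, so $f=\|\cdot\|+\iota_K$; moreover $\gamma D=D$, hence $C=D$, $P_Cx=P_Dx$ and $d_C(x)=d_D(x)$. Applying \eqref{e:jjm3} with $\gamma=1$ to $\iota_D$ and using $\iota_D^*=\sigma_D=\iota_K$ gives Moreau's conic decomposition $x=P_Dx+P_Kx$, so $x-P_Cx=P_Kx$ and $d_C(x)=\|P_Kx\|$; substituting into \eqref{eQrs54tGhfnnUjH0-02g} produces \eqref{eQrs54tGhfnnUjH0-03g}.

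I expect the only genuine obstacle to be the identity $\scal{p}{P_Cx/\gamma}=\sigma_D(p)$ in the second case: the key insight is that membership of $P_Cx/\gamma$ in the face $\partial\sigma_D(p)$ of $D$ exposed by $p$ is precisely the obtuse-angle (Kolmogorov) inequality for $P_Cx$, after rescaling between $D$ and $C=\gamma D$ and using that $p$ is a positive multiple of $x-P_Cx$. The remaining ingredients — the subdifferential sum rule (legitimate since $\|\cdot\|$ is everywhere continuous), the subdifferential of the norm, and isolating the corner case $p=0$ where $\partial\|\cdot\|$ is set-valued — are routine. An alternative route would be to write $f=\sigma_{B(0;1)+D}$, identify $f^*=\iota_{\overline{B(0;1)+D}}$, and use the Moreau decomposition \eqref{e:jjm3} to reduce $\prox_{\gamma f}x$ to $x-P_{\overline{B(0;\gamma)+C}}x$, whose computation again rests on \eqref{e:kolmogorov}.
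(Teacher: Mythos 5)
Your proof is correct, but it takes a genuinely different route from the paper's. The paper conjugates: it writes $f=\iota_{B(0;1)}^*+\iota_D^*=\sigma_{B(0;1)+D}$, applies the Moreau decomposition \eqref{e:jjm3} together with \eqref{e:jjm4} to reduce $\prox_{\gamma f}x$ to $x-P_{B(0;\gamma)+C}\,x$, and then invokes the known formula for the projection onto the sum of a ball and a closed convex set (\cite[Propositions~28.1(ii) and~28.10]{Livre1}); the conic case follows, as in your argument, from $\Id-P_D=P_K$. You instead verify the announced formula directly through the characterization \eqref{e:jjm}: you split $\partial(\gamma f)(p)$ via the subdifferential of the norm and the exposed-face description of $\partial\sigma_D$, and the crux --- that $P_Cx/\gamma$ lies in the face of $D$ exposed by $p$ --- is exactly the Kolmogorov inequality \eqref{e:kolmogorov}, since $p$ is a positive multiple of $x-P_Cx$. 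Two small remarks in your favor: because $\prox_{\gamma f}x$ is unique and \eqref{e:jjm} is an equivalence, verifying the candidates suffices; and you only ever need the trivial inclusion $\partial\|\cdot\|(p)+\partial\sigma_D(p)\subseteq\partial f(p)$, so the sum rule (which is anyway valid here, $\|\cdot\|$ being everywhere continuous) is not a genuine dependency. What each approach buys: the paper's is shorter and derives the answer rather than checking it, at the cost of leaning on two external projection results; yours is more elementary and self-contained, at the cost of having to guess the closed form in advance. The alternative you sketch in your last paragraph is precisely the paper's proof.
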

\begin{proof}
Using elementary convex analysis, we obtain
\begin{equation}
\label{eQrs54tGhfnnUjH0-02h}
f
=\iota^*_{B(0;1)}+\iota^*_D
=\big(\iota_{B(0;1)}\infconv\iota_D\big)^*
=\iota_{B(0;1)+D}^*
=\sigma_{B(0;1)+D}.
\end{equation}
Hence, it follows from \eqref{e:jjm3} and \eqref{e:jjm4} that
\begin{equation}
\label{eQrs54tGhfnnUjH0-02i}
\prox_{\gamma f}x
=x-\gamma\prox_{f^*/\gamma}(x/\gamma)
=x-\gamma P_{B(0;1)+D}(x/\gamma).
\end{equation}
However by \cite[Propositions~28.1(ii) and~28.10]{Livre1},
\begin{equation}
\label{eQrs54tGhfnnUjH0-02j}
\gamma P_{B(0;1)+D}(x/\gamma)
=P_{B(0;\gamma)+C}\,x=
\begin{cases}
x,&\text{if}\;\;d_{C}(x)\leq\gamma;\\
P_{C}x+\gamma\dfrac{x-P_{C}}{d_{C}(x)},
&\text{if}\;\;d_{C}(x)>\gamma.
\end{cases}
\end{equation}
Upon combining \eqref{eQrs54tGhfnnUjH0-02i} and \eqref{eQrs54tGhfnnUjH0-02j}, we 
arrive at \eqref{eQrs54tGhfnnUjH0-02g}.
Now suppose that, in addition, $D$ is a cone. Then $C=D$,
$\sigma_D=\iota_{K}$, and 
\eqref{e:jjm3} yields $\Id-P_D=P_{K}$. Altogether, 
\eqref{eQrs54tGhfnnUjH0-02g} reduces to \eqref{eQrs54tGhfnnUjH0-03g}.
\end{proof}

\subsection{Perspective functions}

We review here some essential properties of perspective functions.

\begin{lemma}{\rm\cite{Scda16a}}
\label{levU74gdnx-21}
Let $\varphi\in\Gamma_0(\GG)$. Then the following hold:
\begin{enumerate}
\item
\label{levU74gdnx-21ii}
$\widetilde{\varphi}$ is a positively homogeneous function in 
$\Gamma_0(\RR\oplus\GG)$. 
\item
\label{levU74gdnx-21iii}
Let $C=\menge{(\mu,u)\in\RR\times\GG}{\mu+\varphi^*(u)\leq 0}$. 
Then $(\widetilde{\varphi})^*=\iota_C$ and 
$\widetilde{\varphi}=\sigma_C$.
\item
\label{levU74gdnx-21iii+}
Let $\eta\in\RR$ and $y\in\GG$. Then
\begin{equation}
\label{ekIjiiIus3W9-23x}
\partial\widetilde{\varphi}(\eta,y)=
\begin{cases}
\menge{\big(\varphi(y/\eta)-\scal{y}{u}/\eta,u\big)}
{u\in\partial\varphi(y/\eta)},&\text{if}\;\;\eta>0;\\
\menge{(\mu,u)\in C}{\sigma_{\dom\varphi^*}(y)=\scal{u}{y}},
&\text{if}\;\;\eta=0\;\text{and}\;y\neq 0;\\
C,&\text{if}\;\;\eta=0\;\text{and}\;y=0;\\
\emp,&\text{if}\;\;\eta<0.
\end{cases}
\end{equation}
\item
\label{levU74gdnx-21iv}
Suppose that $\dom\varphi^*$ is open or that $\varphi$ is 
supercoercive, let $\eta\in\RR$, and let $y\in\GG$. Then
\begin{equation}
\label{ekIjiiIus3W9-23y}
\partial\widetilde{\varphi}(\eta,y)=
\begin{cases}
\menge{\big(\varphi(y/\eta)-\scal{y}{u}/\eta,u\big)}
{u\in\partial\varphi(y/\eta)},&\text{if}\;\;\eta>0;\\
C,&\text{if}\;\;\eta=0\;\text{and}\;y=0;\\
\emp,&\text{otherwise.}
\end{cases}
\end{equation}
\end{enumerate}
\end{lemma}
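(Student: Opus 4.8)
The plan is to derive all four assertions from the single identity $\widetilde{\varphi}=\sigma_C$ of \ref{levU74gdnx-21iii}, which I would establish first by computing the support function of $C$ directly. Since $\varphi^*\in\Gamma_0(\GG)$ we have $\dom\varphi^*\neq\emp$, and for $u\in\dom\varphi^*$ the $u$-section of $C$ is the half-line $\menge{\mu\in\RR}{\mu\leq-\varphi^*(u)}$, so by \eqref{e:support}
\begin{equation*}
\sigma_C(\eta,y)=\sup_{u\in\dom\varphi^*}\Big(\scal{u}{y}+\sup_{\mu\leq-\varphi^*(u)}\eta\mu\Big),
\end{equation*}
whose inner supremum is $\pinf$, $0$, or $-\eta\varphi^*(u)$ according as $\eta<0$, $\eta=0$, or $\eta>0$. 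This produces $\pinf$ when $\eta<0$; the value $\sigma_{\dom\varphi^*}(y)=(\rec\varphi)(y)$ when $\eta=0$, using the standard identity $\rec\varphi=\sigma_{\dom\varphi^*}$ for $\varphi\in\Gamma_0(\GG)$ (see \cite{Livre1}); and $\eta\sup_{u\in\GG}(\scal{u}{y/\eta}-\varphi^*(u))=\eta\varphi^{**}(y/\eta)=\eta\varphi(y/\eta)$ when $\eta>0$. Comparing with \eqref{eevU74gdnx-08l} gives $\widetilde{\varphi}=\sigma_C$. As $C$ is nonempty, closed, and convex (it is a closed sublevel set of the convex function $(\mu,u)\mapsto\mu+\varphi^*(u)$), we have $\iota_C\in\Gamma_0(\RR\oplus\GG)$, and taking conjugates yields $(\widetilde{\varphi})^*=(\sigma_C)^*=\iota_C^{**}=\iota_C$, which is \ref{levU74gdnx-21iii}. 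Then \ref{levU74gdnx-21ii} drops out: a support function of a nonempty set is proper, lower semicontinuous, and sublinear, hence convex and positively homogeneous (positive homogeneity also being immediate from \eqref{eevU74gdnx-08l} and that of $\rec\varphi$).

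For \ref{levU74gdnx-21iii+} I would feed $\iota_C$ into \eqref{e:subdiff2}: since $(\widetilde{\varphi})^*=\iota_C$, we get $(\mu,u)\in\partial\widetilde{\varphi}(\eta,y)\Leftrightarrow(\eta,y)\in N_C(\mu,u)$, which by \eqref{e:nc} means $(\mu,u)\in C$ and $\eta\mu+\scal{u}{y}=\sigma_C(\eta,y)=\widetilde{\varphi}(\eta,y)$, i.e.\ that $(\mu,u)$ is a point of $C$ at which the linear form $(\eta,y)$ attains its supremum over $C$. A case analysis on $\eta$ then finishes it: $\eta<0$ gives $\emp$, since $\widetilde{\varphi}(\eta,y)=\pinf$; $(\eta,y)=(0,0)$ gives all of $C$; for $\eta=0$, $y\neq0$ the equality reads $\scal{u}{y}=\sigma_{\dom\varphi^*}(y)$, which is the stated set; and for $\eta>0$ (with $\varphi(y/\eta)\in\RR$, else both sides are empty) dividing the supporting equality by $\eta$ gives $\mu+\scal{u}{y/\eta}=\varphi(y/\eta)$, which together with the constraint $\mu\leq-\varphi^*(u)$ and the Fenchel--Young inequality \eqref{e:subdiff11} forces equality in Fenchel--Young, hence $u\in\partial\varphi(y/\eta)$ by \eqref{e:subdiff12} and $\mu=\varphi(y/\eta)-\scal{u}{y}/\eta$; the converse inclusion is a one-line check ($\mu+\varphi^*(u)=0$, so $(\mu,u)\in C$, and $\eta\mu+\scal{u}{y}=\eta\varphi(y/\eta)$).

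Assertion \ref{levU74gdnx-21iv} is then a corollary of \ref{levU74gdnx-21iii+}: I only need the third branch of \eqref{ekIjiiIus3W9-23x} to be empty under either hypothesis. If $\varphi$ is supercoercive then $\dom\varphi^*=\GG$ (see \cite{Livre1}), so $(\rec\varphi)(y)=\pinf$ for $y\neq0$, whence $(0,y)\notin\dom\widetilde{\varphi}$ and $\partial\widetilde{\varphi}(0,y)=\emp$. If $\dom\varphi^*$ is open and $(\mu,u)\in\partial\widetilde{\varphi}(0,y)$ with $y\neq0$, then $u\in\dom\varphi^*$ maximizes $\scal{\cdot}{y}$ over $\dom\varphi^*$, which is impossible since $u+\varepsilon y\in\dom\varphi^*$ and $\scal{u+\varepsilon y}{y}>\scal{u}{y}$ for small $\varepsilon>0$. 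In either case \eqref{ekIjiiIus3W9-23x} collapses to \eqref{ekIjiiIus3W9-23y}.

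The one genuinely non-formal step is establishing $\widetilde{\varphi}=\sigma_C$, and within it the treatment of the $\eta=0$ slice, which relies on the recession/support-function duality $\rec\varphi=\sigma_{\dom\varphi^*}$ (and, for \ref{levU74gdnx-21iv}, on $\varphi$ supercoercive $\Rightarrow\dom\varphi^*=\GG$), both taken from \cite{Livre1}. Beyond that, the only thing needing care is the bookkeeping of the degenerate situations ($\widetilde{\varphi}(\eta,y)=\pinf$, or $y/\eta\notin\dom\varphi$, or $\partial\varphi(y/\eta)=\emp$) so that the two sides of \eqref{ekIjiiIus3W9-23x} genuinely agree there.
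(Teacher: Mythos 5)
This lemma is quoted from the companion paper \cite{Scda16a} and is not proved in the present paper, so there is no internal proof to compare against; I can only assess your argument on its own terms, and it is correct. Your computation of $\sigma_C$ by splitting the supremum over the $u$-sections of $C$ is sound in all three regimes of $\eta$ (the $\eta>0$ case reducing to $\varphi^{**}=\varphi$, the $\eta=0$ case to the standard identity $\rec\varphi=\sigma_{\dom\varphi^*}$), and the conjugation $(\sigma_C)^*=\iota_C$ is legitimate because $C$ is a nonempty closed convex sublevel set. Your derivation of \ref{levU74gdnx-21iii+} from the equivalence $(\mu,u)\in\partial\widetilde{\varphi}(\eta,y)\Leftrightarrow(\eta,y)\in N_C(\mu,u)$, read as ``$(\mu,u)$ is a point of $C$ where the linear form $(\eta,y)$ attains $\sigma_C(\eta,y)$,'' is clean, and the Fenchel--Young pinching in the $\eta>0$ branch correctly forces $u\in\partial\varphi(y/\eta)$ and pins down $\mu$; you also handle the degenerate cases ($\varphi(y/\eta)=\pinf$, $\partial\varphi(y/\eta)=\emp$) so that both sides are simultaneously empty. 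The reduction of \ref{levU74gdnx-21iv} to showing the $\eta=0$, $y\neq 0$ branch is empty is exactly the right move, and both sub-arguments (supercoercivity forcing $\dom\varphi^*=\GG$, openness forbidding a maximizer of $\scal{\cdot}{y}$ on $\dom\varphi^*$) are valid. It is worth noting that the present paper, in Remark~\ref{rkIjiiIus3W6-17}, computes $N_C$ for the same set $C$ by viewing it as $\lev{0}h$ and invoking the level-set normal-cone formula of \cite[Lemma~26.17]{Livre1} under a Slater-type condition; your route, which characterizes $N_C(\mu,u)$ directly as the set of linear forms supported at $(\mu,u)$, reaches the same conclusions with more elementary tools and without any qualification condition, at the price of the explicit case analysis you carry out.
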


We refer to the companion paper \cite{Scda16a} for further
properties of perspective functions as well as examples.
Here are two important instances of (composite) perspective
functions that will play a central role in Section~\ref{sec:4}.  

\begin{lemma}
\label{l:5th}
Let $L\colon\HH\to\GG$ be linear and bounded, let $r\in\GG$, 
let $u\in\HH$, let $\alpha\in\RPP$, let $\rho\in\RR$, and let 
$q\in\left]1,\pinf\right[$. Set 
\begin{equation}
\label{ekIjiiIus3W9-24t}
f\colon\HH\to\RX\colon x\mapsto
\begin{cases}
\Frac{\|Lx-r\|^q}{\alpha|\scal{x}{u}-\rho|^{q-1}},
&\text{if}\;\;\scal{x}{u}>\rho;\\[3mm]
0,&\text{if}\;\;Lx=r\;\;\text{and}\;\;\scal{x}{u}=\rho;\\
\pinf,&\text{otherwise}
\end{cases}
\end{equation}
and $A\colon\HH\to\RR\oplus\GG\colon x\mapsto
(\scal{x}{u}-\rho,Lx-r)$. 
Then $f=[\,\|\cdot\|^q/\alpha]^\sim\circ A\in\Gamma_0(\HH)$. 
\end{lemma}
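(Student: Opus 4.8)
The plan is to recognize $f$ as the composition of the perspective of $\varphi=\|\cdot\|^q/\alpha$ with the affine operator $A$, and then read off $f\in\Gamma_0(\HH)$ from Lemma~\ref{levU74gdnx-21}\ref{levU74gdnx-21ii}. First I would record that $\varphi\colon y\mapsto\|y\|^q/\alpha$ belongs to $\Gamma_0(\GG)$: it is the composition of the convex nondecreasing map $t\mapsto t^q/\alpha$ on $\RP$ with the norm, hence convex, and it is finite and continuous, hence proper and lower semicontinuous. Since $q>1$, $\varphi(y)/\|y\|=\|y\|^{q-1}/\alpha\to\pinf$ as $\|y\|\to\pinf$, so $\varphi$ is supercoercive; I would then compute, via \eqref{e4RJ3hjhhjeTr6p9-23a} with $z=0\in\dom\varphi$, that for every $y\in\GG$, $(\rec\varphi)(y)=\lim_{\lambda\to\pinf}\varphi(\lambda y)/\lambda=\lim_{\lambda\to\pinf}\lambda^{q-1}\|y\|^q/\alpha$, which is $0$ if $y=0$ and $\pinf$ otherwise; hence $\rec\varphi=\iota_{\{0\}}$.

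Next I would plug this into Definition~\ref{d:perspective} (i.e., \eqref{eevU74gdnx-08l}) to obtain
\begin{equation*}
\widetilde{\varphi}(\eta,y)=
\begin{cases}
\|y\|^q/(\alpha\eta^{q-1}),&\text{if }\eta>0;\\
0,&\text{if }\eta=0\text{ and }y=0;\\
\pinf,&\text{otherwise,}
\end{cases}
\end{equation*}
and then evaluate at $Ax=(\scal{x}{u}-\rho,Lx-r)$, distinguishing the cases $\scal{x}{u}>\rho$ (where $\eta^{q-1}=(\scal{x}{u}-\rho)^{q-1}=|\scal{x}{u}-\rho|^{q-1}$), $\scal{x}{u}=\rho$ with $Lx=r$, $\scal{x}{u}=\rho$ with $Lx\neq r$, and $\scal{x}{u}<\rho$. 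Matching the resulting values against \eqref{ekIjiiIus3W9-24t} branch by branch gives $f=\widetilde{\varphi}\circ A$. This identification is the substantive content of the lemma, although it amounts only to bookkeeping.

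To finish, I would note that Lemma~\ref{levU74gdnx-21}\ref{levU74gdnx-21ii} gives $\widetilde{\varphi}\in\Gamma_0(\RR\oplus\GG)$, and that $A$, being the sum of a bounded linear operator and a constant vector, is continuous; hence $f=\widetilde{\varphi}\circ A$ is convex and lower semicontinuous. Properness of $f$ then follows because $\ran A$ meets $\menge{(\eta,y)\in\RR\times\GG}{\eta>0}\subseteq\dom\widetilde{\varphi}$: one picks $x\in\HH$ with $\scal{x}{u}>\rho$ (e.g. $x=(\rho+1)u/\|u\|^2$ when $u\neq0$), so that $f(x)<\pinf$. Therefore $f\in\Gamma_0(\HH)$. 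The only step that requires anything beyond a direct check is the evaluation of $\rec\varphi$, and even that is immediate from the supercoercivity of $\varphi$; the remaining difficulty, such as it is, is keeping the case analysis in the second step straight.
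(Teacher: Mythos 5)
Your proof is correct, but it takes a genuinely different route from the paper: the paper disposes of this lemma in a single line by citing Example~4.2 of the companion paper \cite{Scda16a}, whereas you reconstruct the result from first principles --- checking $\|\cdot\|^q/\alpha\in\Gamma_0(\GG)$, computing $\rec(\|\cdot\|^q/\alpha)=\iota_{\{0\}}$ from supercoercivity via \eqref{e4RJ3hjhhjeTr6p9-23a}, assembling the perspective from Definition~\ref{d:perspective}, matching it branch by branch against \eqref{ekIjiiIus3W9-24t} after composing with $A$, and then combining Lemma~\ref{levU74gdnx-21}\ref{levU74gdnx-21ii} with the continuity of the affine map $A$ to conclude. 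What your argument buys is self-containedness: a reader of this paper alone can verify the lemma, and the case analysis makes explicit why the second branch of \eqref{ekIjiiIus3W9-24t} reads ``$Lx=r$ and $\scal{x}{u}=\rho$'' rather than just ``$\scal{x}{u}=\rho$''. What the citation buys is brevity and access to the companion paper's general treatment of perspective functions under affine precomposition. One caveat worth flagging: your properness step requires a point $x$ with $\scal{x}{u}>\rho$, which exists precisely when $u\neq 0$ or $\rho<0$; in the degenerate case $u=0$ and $\rho>0$ one gets $f\equiv\pinf$, so the conclusion $f\in\Gamma_0(\HH)$ implicitly presumes $\dom f\neq\emp$ (automatic in the intended TREX application, where $u=X^\top x_j$). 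Aside from that edge case, which the paper's one-line proof does not address either, everything checks out.
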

\begin{proof}
This is a special case of \cite[Example~4.2]{Scda16a}.
\end{proof}

\begin{lemma}{\rm\cite[Example~3.6]{Scda16a}}
\label{levU74gdnx-27}
Let $\phi\in\Gamma_0(\RR)$ be an even function, let $v\in\GG$, 
let $\delta\in\RR$, and set 
\begin{equation}
\label{eevU74gdnx-17A}
g\colon\RR\oplus\GG\to\RX\colon (\eta,y)\mapsto
\begin{cases}
\eta\phi(\|y\|/\eta)+\scal{y}{v}+\delta\eta,&\text{if}\;\;\eta>0;\\
(\rec\phi)(\|y\|)+\scal{y}{v},
&\text{if}\;\;\eta=0;\\
\pinf,&\text{if}\;\;\eta<0.
\end{cases}
\end{equation}
Then $g=[\phi\circ\|\cdot\|+\delta\scal{\cdot}{v}]^\sim
\in\Gamma_0(\RR\oplus\GG)$.
\end{lemma}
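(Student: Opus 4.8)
\emph{Proof sketch.} The plan is to recognize $g$ as the perspective (Definition~\ref{d:perspective}) of the function
\[
\psi\colon\GG\to\RX\colon y\mapsto\phi(\|y\|)+\scal{y}{v}+\delta ,
\]
and to deduce $g\in\Gamma_0(\RR\oplus\GG)$ from Lemma~\ref{levU74gdnx-21}\ref{levU74gdnx-21ii}. First I would verify $\psi\in\Gamma_0(\GG)$: since $\phi\in\Gamma_0(\RR)$ is even and convex it is nondecreasing on $\RPP$, so $\phi\circ\|\cdot\|$, being the composition of such a function with the continuous convex map $\|\cdot\|$, lies in $\Gamma_0(\GG)$; properness is clear because $\dom\phi$ is a nonempty symmetric interval, hence contains $0$, whence $\phi(\|0\|)=\phi(0)<\pinf$. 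Adding the continuous affine functional $y\mapsto\scal{y}{v}+\delta$ keeps us in $\Gamma_0(\GG)$.

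Next I would compute $\widetilde{\psi}$ branch by branch from \eqref{eevU74gdnx-08l}. For $\eta>0$, $\widetilde{\psi}(\eta,y)=\eta\psi(y/\eta)=\eta\phi(\|y\|/\eta)+\scal{y}{v}+\delta\eta$, the factor $\eta$ absorbing $1/\eta$ in the linear term; for $\eta<0$ both sides equal $\pinf$. The case $\eta=0$, where $\widetilde{\psi}(0,y)=(\rec\psi)(y)$, is the one requiring genuine work: the recession function of the sum $\phi\circ\|\cdot\|+(\scal{\cdot}{v}+\delta)$ splits as the sum of the recession functions (legitimate here, the affine summand contributing the finite recession function $y\mapsto\scal{y}{v}$), so it remains to prove $(\rec(\phi\circ\|\cdot\|))(y)=(\rec\phi)(\|y\|)$. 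For this, fixing $z$ with $\|z\|\in\dom\phi$, the estimate $\big|\,\|z+\alpha y\|-\alpha\|y\|\,\big|\le\|z\|$ together with the monotonicity of $\phi$ on $\RPP$ sandwiches $\phi(\|z+\alpha y\|)/\alpha$ between $\phi(\alpha\|y\|-\|z\|)/\alpha$ and $\phi(\alpha\|y\|+\|z\|)/\alpha$ for $\alpha$ large, and both bounds tend to $(\rec\phi)(\|y\|)$ as $\alpha\to\pinf$ by the limit formula in \eqref{e4RJ3hjhhjeTr6p9-23a} (the case $y=0$ being immediate, both sides vanishing). Hence $g=\widetilde{\psi}$.

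I expect the recession bookkeeping at $\eta=0$ to be the only real obstacle: one must be sure the additivity of recession functions and the identity $(\rec(\phi\circ\|\cdot\|))(y)=(\rec\phi)(\|y\|)$ hold with no further qualification, and this is precisely where convexity and evenness of $\phi$ (via monotonicity on $\RPP$) and the presence of a common point of the relevant domains enter. With $g=\widetilde{\psi}$ in hand, Lemma~\ref{levU74gdnx-21}\ref{levU74gdnx-21ii} yields $g\in\Gamma_0(\RR\oplus\GG)$ (and its positive homogeneity) at once. Alternatively, for the $\Gamma_0$ conclusion by itself one may simply note that $g$ differs from $[\phi\circ\|\cdot\|]^\sim\in\Gamma_0(\RR\oplus\GG)$ only by the bounded linear functional $(\eta,y)\mapsto\scal{y}{v}+\delta\eta=\scal{(\eta,y)}{(\delta,v)}$ on $\RR\oplus\GG$, which transfers the property immediately.
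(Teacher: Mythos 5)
Your argument is correct. The paper itself gives no proof of this lemma---it is imported verbatim from the companion paper \cite[Example~3.6]{Scda16a}---so there is no internal proof to compare against; your direct verification is the natural one and it holds up. Identifying $g$ as $\widetilde{\psi}$ with $\psi=\phi\circ\|\cdot\|+\scal{\cdot}{v}+\delta$ (note that the formula $[\phi\circ\|\cdot\|+\delta\scal{\cdot}{v}]^\sim$ in the statement is evidently a typo for this, as the proof of Corollary~\ref{cevU74gdnx-29a}, which takes $\varphi=\phi\circ\|\cdot\|+\delta+\scal{\cdot}{v}$, confirms), checking $\psi\in\Gamma_0(\GG)$ via evenness of $\phi$ (hence its monotonicity on $\RP$ and the fact that $0\in\dom\phi$), and reducing the $\eta=0$ branch to the identity $(\rec(\phi\circ\|\cdot\|))(y)=(\rec\phi)(\|y\|)$ together with the additivity of recession functions under a continuous affine perturbation are exactly the right steps. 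Your sandwich argument for the recession identity is sound---it does require $y\neq 0$ and $\alpha$ large enough that $\alpha\|y\|-\|z\|\geq 0$, which you acknowledge---but it can be short-circuited by taking $z=0$ as the base point in \eqref{e4RJ3hjhhjeTr6p9-23a}, since then $\phi(\|0+\alpha y\|)/\alpha=\phi(\alpha\|y\|)/\alpha\to(\rec\phi)(\|y\|)$ directly, with no sandwich needed. Your closing observation that the $\Gamma_0$ membership alone also follows by perturbing $[\phi\circ\|\cdot\|]^\sim\in\Gamma_0(\RR\oplus\GG)$ by the continuous linear functional $(\eta,y)\mapsto\delta\eta+\scal{y}{v}=\scal{(\eta,y)}{(\delta,v)}$ is likewise correct, and is arguably the cleanest route to that half of the claim.
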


\section{Proximity operator of a perspective function}
\label{sec:3}

\subsection{Main result}
We start with a characterization of the proximity operator
of a perspective function when $\dom\varphi^*$ is open.

\begin{theorem}
\label{tevU74gdnx-06}
Let $\varphi\in\Gamma_0(\GG)$, let $\gamma\in\RPP$, let
$\eta\in\RR$, and let $y\in\GG$. Then the following hold:
\begin{enumerate}
\item
\label{tevU74gdnx-06i}
Suppose that $\eta+\gamma\varphi^*(y/\gamma)\leq 0$. 
Then $\prox_{\gamma\widetilde{\varphi}}(\eta,y)=(0,0)$.
\item
\label{tevU74gdnx-06ii}
Suppose that $\dom\varphi^*$ is open and that 
$\eta+\gamma\varphi^*(y/\gamma)>0$. Then 
\begin{equation}
\label{e:DC2016-03-01a}
\prox_{\gamma\widetilde{\varphi}}(\eta,y)=
\big(\eta+\gamma\varphi^*(p),y-\gamma p\big), 
\end{equation}
where $p$ is the unique solution to the inclusion
\begin{equation}
\label{e:scdaJune2016}
y\in\gamma p+\big(\eta+\gamma\varphi^*(p)\big)\partial\varphi^*(p).
\end{equation}
If $\varphi^*$ is differentiable at $p$, then
$p$ is characterized by
$y=\gamma p+(\eta+\gamma\varphi^*(p))\nabla\varphi^*(p)$.
\end{enumerate}
\end{theorem}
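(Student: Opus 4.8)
The approach is to work straight through the subdifferential characterization \eqref{e:jjm} of the proximity operator, namely $(\xi,z)=\prox_{\gamma\widetilde{\varphi}}(\eta,y)\Leftrightarrow (\eta,y)-(\xi,z)\in\gamma\,\partial\widetilde{\varphi}(\xi,z)$, combined with the explicit description of $\partial\widetilde{\varphi}$ in Lemma~\ref{levU74gdnx-21}. For \ref{tevU74gdnx-06i} I would simply test the candidate $(0,0)$: by Lemma~\ref{levU74gdnx-21}\ref{levU74gdnx-21iii+}, $\partial\widetilde{\varphi}(0,0)=C:=\menge{(\mu,u)\in\RR\times\GG}{\mu+\varphi^*(u)\le 0}$, so $(0,0)=\prox_{\gamma\widetilde{\varphi}}(\eta,y)$ holds iff $(\eta,y)\in\gamma C$, i.e.\ iff $\eta/\gamma+\varphi^*(y/\gamma)\le 0$, which after multiplication by $\gamma>0$ is exactly the hypothesis. (Equivalently, one may invoke Lemma~\ref{levU74gdnx-21}\ref{levU74gdnx-21iii} with \eqref{e:jjm3}--\eqref{e:jjm4} to write $\prox_{\gamma\widetilde{\varphi}}(\eta,y)=(\eta,y)-\gamma P_C((\eta,y)/\gamma)$ and observe that $(\eta,y)/\gamma\in C$.)

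For \ref{tevU74gdnx-06ii}, let $(\xi,z)=\prox_{\gamma\widetilde{\varphi}}(\eta,y)$; this point exists and is unique since $\widetilde{\varphi}\in\Gamma_0(\RR\oplus\GG)$. The crux of the proof is to show that $\xi>0$, and this is exactly where the openness of $\dom\varphi^*$ enters. By \eqref{e:jjm} we have $\partial\widetilde{\varphi}(\xi,z)\neq\emp$, so Lemma~\ref{levU74gdnx-21}\ref{levU74gdnx-21iv} forces either $\xi>0$ or $(\xi,z)=(0,0)$; but if $(\xi,z)=(0,0)$ then $(\eta,y)/\gamma\in\partial\widetilde{\varphi}(0,0)=C$, i.e.\ $\eta+\gamma\varphi^*(y/\gamma)\le 0$, contradicting the standing hypothesis $\eta+\gamma\varphi^*(y/\gamma)>0$. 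Hence $\xi>0$.

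Now apply the $\eta>0$ branch of Lemma~\ref{levU74gdnx-21}\ref{levU74gdnx-21iv}: $\partial\widetilde{\varphi}(\xi,z)=\menge{(\varphi(z/\xi)-\scal{z}{w}/\xi,\,w)}{w\in\partial\varphi(z/\xi)}$, so \eqref{e:jjm} unpacks to the existence of $w\in\partial\varphi(z/\xi)$ with $y-z=\gamma w$ and $\eta-\xi=\gamma(\varphi(z/\xi)-\scal{z}{w}/\xi)$. Set $p:=w=(y-z)/\gamma$. Using the Fenchel--Young identity \eqref{e:subdiff12}, $w\in\partial\varphi(z/\xi)$ gives $\varphi(z/\xi)+\varphi^*(p)=\scal{z/\xi}{p}$, so the second equation collapses to $\eta-\xi=-\gamma\varphi^*(p)$, i.e.\ $\xi=\eta+\gamma\varphi^*(p)$; together with $z=y-\gamma p$ this is precisely \eqref{e:DC2016-03-01a}. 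Finally, $w\in\partial\varphi(z/\xi)$ is equivalent by \eqref{e:subdiff2} to $z/\xi\in\partial\varphi^*(p)$, and multiplying through by $\xi=\eta+\gamma\varphi^*(p)>0$ and inserting $z=y-\gamma p$ yields the inclusion \eqref{e:scdaJune2016}. Reading the same chain in reverse shows that any $p$ satisfying \eqref{e:scdaJune2016} with $\eta+\gamma\varphi^*(p)>0$ makes $(\eta+\gamma\varphi^*(p),y-\gamma p)$ satisfy \eqref{e:jjm}, hence equal to the unique point $(\xi,z)$; thus $p=(y-z)/\gamma$ is uniquely determined. The differentiable case is then immediate from \eqref{e:2384572k}. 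I expect the sole real difficulty to be the positivity $\xi>0$: it is what licenses the passage to the $\eta>0$ formula for $\partial\widetilde{\varphi}$ and what singles out the relevant $p$, since the bare inclusion \eqref{e:scdaJune2016} could otherwise admit extraneous roots with $\eta+\gamma\varphi^*(p)<0$.
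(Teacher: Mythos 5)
Your proof is correct and follows essentially the same route as the paper's: establish $\xi>0$ via Lemma~\ref{levU74gdnx-21}\ref{levU74gdnx-21iv} after ruling out $(\xi,z)=(0,0)$, then unpack \eqref{e:jjm} with the $\eta>0$ branch of the subdifferential formula, apply Fenchel--Young \eqref{e:subdiff12} and the inversion rule \eqref{e:subdiff2}. The only (cosmetic) difference is that you derive the thresholding equivalence $\prox_{\gamma\widetilde{\varphi}}(\eta,y)=(0,0)\Leftrightarrow(\eta,y)\in\gamma C$ directly from $\partial\widetilde{\varphi}(0,0)=C$ and \eqref{e:jjm}, whereas the paper cites the proximal-thresholder result of \cite{Siop07}; your version is self-contained and equally valid.
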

\begin{proof}
It follows from 
Lemma~\ref{levU74gdnx-21}\ref{levU74gdnx-21iii} that
\begin{equation}
\label{eqerY76t0k3-10a}
\widetilde{\varphi}=\sigma_C,\quad\text{where}\quad
C=\menge{(\mu,u)\in\RR\oplus\GG}{\mu+\varphi^*(u)\leq 0}.
\end{equation}
Since $\varphi\in\Gamma_0(\GG)$, we have 
$\varphi^*\in\Gamma_0(\GG)$. Therefore, 
$C$ is a nonempty closed convex set. In turn, we derive
from \cite[Proposition~3.2]{Siop07} that 
$\prox_{\gamma\widetilde{\varphi}}=\prox_{\sigma_{\gamma C}}$ 
is a proximal thresholder on $\gamma C$ in the sense that
\begin{equation}
\label{e:oulan}
(\forall\eta\in\RR)(\forall y\in\GG)\quad
\prox_{\gamma\widetilde{\varphi}}(\eta,y)=(0,0)
\quad\Leftrightarrow\quad(\eta,y)\in\gamma C.
\end{equation}

\ref{tevU74gdnx-06i}: 
By \eqref{eqerY76t0k3-10a} and \eqref{e:oulan},
$(\forall\eta\in\RR)(\forall y\in\GG)$ 
$\prox_{\gamma\widetilde{\varphi}}(\eta,y)=(0,0)$
$\Leftrightarrow$ $\eta+\gamma\varphi^*(y/\gamma)\leq 0$.

\ref{tevU74gdnx-06ii}: Set
$(\chi,q)=\prox_{\gamma\widetilde{\varphi}}(\eta,y)$ and
$p=(y-q)/\gamma$. It follows from \eqref{e:jjm} that 
$(\chi,q)\in\dom(\gamma\partial\widetilde{\varphi})$ and from
\eqref{e:oulan} that $(\chi,q)\neq(0,0)$. Hence, we deduce from 
Lemma~\ref{levU74gdnx-21}\ref{levU74gdnx-21iv} that
$\chi>0$. Furthermore, we derive from \eqref{e:jjm} and 
Lemma~\ref{levU74gdnx-21}\ref{levU74gdnx-21iii+} that
$(\chi,q)$ is characterized by
\begin{equation}
\label{ekIjiiIus3W9-16a}
\eta-\chi=\gamma\varphi(q/\chi)-\scal{q/\chi}{y-q}
\quad\text{and}\quad y-q\in\gamma\partial\varphi(q/\chi),
\end{equation}
i.e., 
\begin{equation}
\label{ekIjiiIus3W9-17a}
(\eta-\chi)/\gamma=\varphi(q/\chi)-\scal{q/\chi}{p}
\quad\text{and}\quad p\in\partial\varphi(q/\chi).
\end{equation}
However, \eqref{e:subdiff12} asserts that 
\begin{equation}
\label{ekIjiiIus3W9-16b}
p\in\partial\varphi(q/\chi)\quad\Leftrightarrow\quad
\varphi(q/\chi)+\varphi^*(p)=\scal{q/\chi}{p}.
\end{equation}
Hence, we derive from \eqref{ekIjiiIus3W9-17a} that
$\varphi^*(p)=(\chi-\eta)/\gamma$, i.e.,
\begin{equation}
\label{ekIjiiIus3W9-17b}
\chi=\eta+\gamma\varphi^*(p).
\end{equation}
Hence, by \eqref{e:subdiff2}, 
\begin{equation}
\label{ekIjiiIus3W9-17c}
p\in\partial\varphi(q/\chi)\quad\Leftrightarrow\quad
q\in\chi\partial\varphi^*(p)\quad\Leftrightarrow\quad
y\in\gamma p+\big(\eta+\gamma\varphi^*(p)\big)\partial\varphi^*(p).
\end{equation}
Altogether, we have established the characterization
\eqref{e:DC2016-03-01a}--\eqref{e:scdaJune2016}, while the
assertion concerning the differentiable case follows from 
\eqref{e:2384572k}.
\end{proof}

\begin{remark}
\label{rkIjiiIus3W6-17}
Here is an alternative proof of Theorem~\ref{tevU74gdnx-06}.
It follows from 
Lemma~\ref{levU74gdnx-21}\ref{levU74gdnx-21iii} that
\begin{equation}
\label{eqerY76t0k3-10A}
\big(\widetilde{\varphi}\big)^*=\iota_C,\quad\text{where}\quad
C=\menge{(\mu,u)\in\RR\oplus\GG}{\mu+\varphi^*(u)\leq 0}
\end{equation}
is a nonempty closed convex set. Hence, using
\eqref{e:jjm3} and \eqref{e:jjm4}, we obtain
\begin{equation}
\label{eqerY76t0k3-10b}
\prox_{\gamma \widetilde{\varphi}}(\eta,y)
=(\eta,y)-\gamma\prox_{\gamma^{-1}\widetilde{\varphi}^*}
\big(\eta/\gamma,y/\gamma\big)
=(\eta,y)-\gamma P_C\big(\eta/\gamma,y/\gamma\big)
=(\eta,y)-P_{\gamma C}\big(\eta,y\big).
\end{equation}
Now set $(\pi,p)=P_C(\eta/\gamma,y/\gamma)$. We deduce from
\eqref{e:jjm4}, \eqref{e:jjm3}, and \eqref{e:nc} that 
$(\pi,p)$ is characterized by 
\begin{equation}
\label{eevU74gdnx-12a}
\big(\eta/\gamma-\pi,y/\gamma-p\big)\in N_C(\pi,p). 
\end{equation}

\ref{tevU74gdnx-06i}:
We have $(\eta/\gamma,y/\gamma)\in C$. Hence, 
$(\pi,p)=(\eta/\gamma,y/\gamma)$ and \eqref{eqerY76t0k3-10b} yields
$\prox_{\gamma \widetilde{\varphi}}(\eta,y)=(0,0)$.

\ref{tevU74gdnx-06ii}: Set
$h\colon\RR\oplus\GG\to\RX\colon(\mu,u)\mapsto\mu+\varphi^*(u)$.
Then $C=\lev{0}h$ and $\dom h=\RR\times\dom\varphi^*$ is open.
It therefore follows from \cite[Proposition~6.43(ii)]{Livre1} that
\begin{equation}
\label{ekIjiiIus3W9-15}
N_{\dom h}(\pi,p)=\{(0,0)\}.
\end{equation}
Now let $z\in\dom\varphi^*$ and let
$\zeta\in\left]\minf,-\varphi^*(z)\right[$. Then
$h(\zeta,z)<0$. Therefore, we derive from
\cite[Lemma~26.17 and Proposition~16.8]{Livre1} and 
\eqref{ekIjiiIus3W9-15} that
\begin{align}
N_C(\pi,p)
&=
\begin{cases}
N_{\dom h}(\pi,p)\cup\cone\,\partial h(\pi,p),
&\text{if}\;\;\pi+\varphi^*(p)=0;\\
N_{\dom h}(\pi,p),&\text{if}\;\;\pi+\varphi^*(p)<0
\end{cases}\label{e:hj4mB}\\
&=
\begin{cases}
\cone\,\partial h(\pi,p),
&\text{if}\;\;\pi+\varphi^*(p)=0;\\
\{(0,0)\},&\text{if}\;\;\pi+\varphi^*(p)<0
\end{cases}\nonumber\\
&=
\begin{cases}
\cone\big(\{1\}\times\partial\varphi^*(p)\big),
&\text{if}\;\;\pi=-\varphi^*(p);\\
\{(0,0)\},&\text{if}\;\;\pi<-\varphi^*(p).
\label{eevU74gdnx-12b}
\end{cases}
\end{align}
Hence, if $\pi<-\varphi^*(p)$, then \eqref{eevU74gdnx-12a}
yields $(\eta/\gamma-\pi,y/\gamma-p)=(0,0)$ and therefore
$(\eta/\gamma,y/\gamma)=(\pi,p)\in C$, which is impossible since 
$(\eta/\gamma,y/\gamma)\notin C$.
Thus, the characterization \eqref{eevU74gdnx-12a} becomes
\begin{equation}
\label{eevU74gdnx-12c}
\pi=-\varphi^*(p)\quad\text{and}\quad
(\exi\nu\in\RPP)(\exi w\in\partial\varphi^*(p))\quad
\big(\eta/\gamma+\varphi^*(p),y/\gamma-p\big)=\nu(1,w)
\end{equation}
that is,
$y\in\gamma p+(\eta+\gamma\varphi^*(p))\partial\varphi^*(p)$.
\end{remark}

\begin{remark}
\label{rkIjiiIus3W9-17}
Let $\varphi\in\Gamma_0(\GG)$ be such that $\dom\varphi^*$ is
open, let $\gamma\in\RPP$, let
$\eta\in\RR$, and let $y\in\GG$ be such that
$\eta+\gamma\varphi^*(y/\gamma)>0$. We derive from 
\eqref{ekIjiiIus3W9-16a} that
$y/\chi-q/\chi\in\partial(\gamma\varphi/\chi)(q/\chi)$ and then
from \eqref{e:jjm} that $q=\chi\prox_{\gamma\varphi/\chi}(y/\chi)$.
Using \eqref{e:jjm3}, we can
also write $q=y-\prox_{\chi\gamma\varphi^*(\cdot/\gamma)}y$.
Hence, we deduce from Theorem~\ref{tevU74gdnx-06} the implicit
relation
\begin{equation}
\label{ekIjiiIus3W9-17x}
\prox_{\gamma\widetilde{\varphi}}(\eta,y)=
\chi\left(1,\prox_{\gamma\varphi/\chi}(y/\chi)\right),
\quad\text{where}\quad\chi=\eta+\gamma\varphi^*
\bigg(\frac{\prox_{\chi\gamma\varphi^*(\cdot/\gamma)}y}{\gamma}
\bigg).
\end{equation}
\end{remark}

The next example is based on distance functions.

\begin{example}
\label{exevU74gdnx-17a}
Let $\varphi=\phi\circ d_D$, where $D=B(0;1)\subset\GG$ and 
$\phi\in\Gamma_0(\RR)$ 
is an even function such that $\phi(0)=0$ and
$\phi^*$ is differentiable on $\RR$. It follows from  
\cite[Examples~13.3(iv)~and~13.23]{Livre1} that
$\varphi^*=\|\cdot\|+\phi^*\circ\|\cdot\|$. Note that, since
$\varphi$ and $\phi$ are even and satisfy 
$\varphi(0)=0$ and $\phi(0)=0$, 
$\varphi^*$ and $\phi^*$ are even and satisfy 
$\varphi^*(0)=0$ and $\phi^*(0)=0$ as well by
\cite[Propositions~13.18 and 13.19]{Livre1}.
In turn, $\phi^{*'}(0)=0$ and we therefore derive from
\cite[Corollary~16.38(iii) and Example~16.25]{Livre1} that
\begin{equation}
\label{e:subnorm}
(\forall u\in\GG)\quad\partial\varphi^*(u)=
\begin{cases}
\bigg\{\dfrac{1+\phi^{*'}(\|u\|)}{\|u\|}u\bigg\},
&\text{if}\;\;u\neq 0;\\
B(0;1),&\text{if}\;\;u=0.
\end{cases}
\end{equation}
We have $\dom\varphi^*=\GG$ and, in view of 
Theorem~\ref{tevU74gdnx-06}\ref{tevU74gdnx-06ii},
we need only assume that 
$\eta+\gamma\varphi^*(y/\gamma)>0$, i.e.,
\begin{equation}
\label{eevU74gdnx-17a}
\eta+\|y\|+\gamma\phi^*(\|y\|/\gamma)>0. 
\end{equation}
Then \eqref{e:scdaJune2016} and \eqref{e:subnorm} yield
\begin{equation}
\label{eevU74gdnx-17b}
\begin{cases}
y=\gamma p+\Big(\eta+\gamma\big(\|p\|+\phi^*(\|p\|)\big)\Big)
\dfrac{1+\phi^{*'}(\|p\|)}{\|p\|}p,&\text{if}\;\;p\neq 0;\\
\|y\|\leq\eta,&\text{if}\;\;p=0.
\end{cases}
\end{equation}
In view of Remark~\ref{rkIjiiIus3W6-17}, the normal cone to 
the set $C$ of \eqref{eqerY76t0k3-10A} at $(0,0)$ is
\begin{equation}
K=\menge{(\eta,y)\in\RP\times\GG}{\|y\|\leq\eta}. 
\end{equation}
So, 
for every $(\eta,y)\in K$, $P_C(\eta/\gamma,y/\gamma)=(0,0)$
and $\prox_{\gamma\widetilde{\varphi}}(\eta,y)=(\eta,y)$. 
Now suppose that $(\eta,y)\notin K$. Then $p\neq 0$ and, taking the 
norm in the upper line of \eqref{eevU74gdnx-17b}, we obtain 
\begin{equation}
\label{eevU74gdnx-17c}
\gamma\|p\|+\Big(\eta+\gamma\big(\|p\|+\phi^*(\|p\|)\big)\Big)
\big(1+\phi^{*'}(\|p\|)\big)=\|y\|.
\end{equation}
Set
\begin{equation}
\label{eevU74gdnx-17x}
\psi\colon s\mapsto
s+\bigg(\dfrac{\eta}{\gamma}+s+\phi^*(s)\bigg)
\big(1+\phi^{*'}(s)\big)-\frac{\|y\|}{\gamma}
\end{equation}
and define
\begin{equation}
\label{eevU74gdnx-16x}
\theta\colon s\mapsto\Frac{1}{2}\bigg(
\bigg(\frac{\eta}{\gamma}+s+\phi^*(s)\bigg)^2+s^2\bigg)
-\frac{\|y\|s}{\gamma}.
\end{equation}
Since $\phi^*$ is convex, $\theta$ is strongly convex and it
therefore admits a unique minimizer $t$. Therefore 
$\psi(t)=\theta'(t)=0$ and $\|p\|=t=\psi^{-1}(\|y\|/\gamma)$ 
is the unique solution to \eqref{eevU74gdnx-17c}.
In turn, \eqref{eevU74gdnx-17b} yields
\begin{equation}
p=\frac{t}{\|y\|+\gamma\psi(t)}y,
\end{equation}
and we obtain $\prox_{\gamma\widetilde{\varphi}}(\eta,y)$
via \eqref{e:DC2016-03-01a}.
\end{example}

Next, we compute the proximity operator of a special case of the
perspective function introduced in Lemma~\ref{levU74gdnx-27}.

\begin{corollary}
\label{cevU74gdnx-29a}
Let $v\in\GG$, let $\delta\in\RR$, and let $\phi\in\Gamma_0(\RR)$ 
be an even function such that $\phi(0)=0$ and
$\phi^*$ is differentiable on $\RR$. Define 
\begin{equation}
\label{eevU74gdnx-13a}
g\colon\RR\oplus\GG\to\RX\colon (\eta,y)\mapsto
\begin{cases}
\eta\phi(\|y\|/\eta)+\delta\eta+\scal{y}{v},&\text{if}\;\;\eta>0;\\
0,&\text{if}\;\;y=0\;\;\text{and}\;\;\eta=0;\\
\pinf,&\text{otherwise.}
\end{cases}
\end{equation}
Let $\gamma\in\RPP$, let $\eta\in\RR$, let $y\in\GG$, and set
\begin{equation}
\label{eevU74gdnx-15x}
\psi\colon
s\mapsto\bigg(\phi^*(s)+\frac{\eta}{\gamma}-\delta\bigg)
\phi^{*'}(s)+s.
\end{equation}
Then $\psi$ is invertible. Moreover, if 
$\eta+\gamma\phi^*(\|y/\gamma-v\|)>\gamma\delta$, set 
\begin{equation}
\label{eevU74gdnx-13c}
t=\psi^{-1}\big(\|y/\gamma-v\|\big)
\quad\text{and}\quad p=
\begin{cases}
v+\dfrac{t}{\|y-\gamma v\|}(y-\gamma v),
&\text{if}\;\;y\neq\gamma v;\\
v,&\text{if}\;\;y=\gamma v.
\end{cases}
\end{equation}
Then 
\begin{equation}
\label{eevU74gdnx-13d}
\prox_{\gamma g}(\eta,y)=
\begin{cases}
\big(\eta+\gamma(\phi^*(t)-\delta),y-\gamma p\big),&\text{if}\;\;
\eta+\gamma\phi^*(\|y/\gamma-v\|)>\gamma\delta;\\
(0,0),&\text{if}\;\;
\eta+\gamma\phi^*(\|y/\gamma-v\|)\leq\gamma\delta.
\end{cases}
\end{equation}
\end{corollary}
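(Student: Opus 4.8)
The plan is to recognize $g$ as the perspective of a concrete $\varphi\in\Gamma_0(\GG)$, apply Theorem~\ref{tevU74gdnx-06} to $\prox_{\gamma g}=\prox_{\gamma\widetilde{\varphi}}$, and then collapse the resulting inclusion in $\RR\oplus\GG$ to a one-dimensional equation by exploiting the invariance of $g$ under rotations of $y$ about $\gamma v$.

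First I would set $\varphi=\phi\circ\|\cdot\|+\scal{\cdot}{v}+\delta$; one checks that $g=\widetilde{\varphi}\in\Gamma_0(\RR\oplus\GG)$ (an instance of Lemma~\ref{levU74gdnx-27}), the value $\pinf$ at ``$\eta=0$, $y\neq0$'' in \eqref{eevU74gdnx-13a} being consistent because $\phi^*$ everywhere differentiable forces $\dom\phi^*=\RR$, hence $\phi$ supercoercive and $\rec\phi=\iota_{\{0\}}$. Next, $\varphi^*\colon u\mapsto\phi^*(\|u-v\|)-\delta$ by the conjugation identity $(\phi\circ\|\cdot\|)^*=\phi^*\circ\|\cdot\|$ for even $\phi$ (as used in Example~\ref{exevU74gdnx-17a}); in particular $\dom\varphi^*=\GG$ is open, so Theorem~\ref{tevU74gdnx-06} applies, and, exactly as in Example~\ref{exevU74gdnx-17a}, $\phi$ even with $\phi(0)=0$ makes $\phi^*$ even with $\phi^*(0)=0$, whence $\phi^{*'}(0)=0$ and $\varphi^*$ is differentiable with $\nabla\varphi^*(u)=\big(\phi^{*'}(\|u-v\|)/\|u-v\|\big)(u-v)$ for $u\neq v$ and $\nabla\varphi^*(v)=0$.

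I would then unwind the two cases. The condition $\eta+\gamma\varphi^*(y/\gamma)\leq0$ versus $>0$ is exactly $\eta+\gamma\phi^*(\|y/\gamma-v\|)\leq\gamma\delta$ versus $>\gamma\delta$, and Theorem~\ref{tevU74gdnx-06}\ref{tevU74gdnx-06i} gives the $(0,0)$ branch of \eqref{eevU74gdnx-13d} immediately. In the other case, the differentiable form of Theorem~\ref{tevU74gdnx-06}\ref{tevU74gdnx-06ii} gives $\prox_{\gamma g}(\eta,y)=(\chi,y-\gamma p)$ where $\chi=\eta+\gamma\varphi^*(p)=\eta+\gamma(\phi^*(\|p-v\|)-\delta)$ and $p$ is the unique solution of $y=\gamma p+\chi\nabla\varphi^*(p)$, with $\chi>0$ (as in the proof of Theorem~\ref{tevU74gdnx-06}\ref{tevU74gdnx-06ii}). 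If $p=v$ this forces $y=\gamma v$, and conversely $y=\gamma v$ forces $p=v$ by uniqueness (there $\psi(0)=0$, so $t=0$), matching the $y=\gamma v$ line of \eqref{eevU74gdnx-13c}. If $p\neq v$, substituting $\nabla\varphi^*(p)$ gives $y-\gamma v=\big(\gamma+\chi\phi^{*'}(t)/t\big)(p-v)$ with $t=\|p-v\|>0$; the scalar factor is positive ($\gamma>0$, $\chi>0$, $\phi^{*'}\geq0$ on $\RP$), so $p-v$ is a positive multiple of $y-\gamma v$ and $p=v+(t/\|y-\gamma v\|)(y-\gamma v)$. Taking norms and dividing by $\gamma$ yields $\|y/\gamma-v\|=t+(\phi^*(t)+\eta/\gamma-\delta)\phi^{*'}(t)=\psi(t)$, and reinserting $\varphi^*(p)=\phi^*(t)-\delta$ into $\chi$ produces the first line of \eqref{eevU74gdnx-13d}.

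The step I expect to be the main obstacle is justifying ``$\psi$ is invertible'' and the identification $t=\psi^{-1}(\|y/\gamma-v\|)$. Since $\phi^*$ is even, convex and $\geq\phi^*(0)=0$, the set $\{s\geq0:\phi^*(s)+\eta/\gamma-\delta\geq0\}$ is a half-line $[s_0,\pinf)$, and on it $\psi$ is strictly increasing — being the sum of the identity and the product of the two nonnegative nondecreasing functions $s\mapsto\phi^*(s)+\eta/\gamma-\delta$ and $\phi^{*'}$ — with $\psi(s_0)=s_0$ and $\psi(s)\to\pinf$, hence a bijection of $[s_0,\pinf)$ onto itself; the case-(ii) hypothesis places $\|y/\gamma-v\|$ in $[s_0,\pinf)$, while $\chi>0$ places $t=\|p-v\|$ there too, so $t=\psi^{-1}(\|y/\gamma-v\|)$ unambiguously. (When $\eta/\gamma\geq\delta$ one has $s_0=0$ and no restriction is needed, and $t$ may alternatively be obtained as the minimizer of the strongly convex $\theta\colon s\mapsto\frac12\big((\eta/\gamma-\delta+\phi^*(s))^2+s^2\big)-\|y/\gamma-v\|s$, whose derivative is $\psi-\|y/\gamma-v\|$, in exact analogy with Example~\ref{exevU74gdnx-17a}.) The remaining ingredients — the conjugate computation, the differentiability of $\varphi^*$, and the reduction to the scalar equation — are routine given Lemma~\ref{levU74gdnx-27}, Theorem~\ref{tevU74gdnx-06}, and the subdifferential facts already used in Example~\ref{exevU74gdnx-17a}.
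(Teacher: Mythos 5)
Your proposal is correct and follows essentially the same route as the paper: identify $\varphi=\phi\circ\|\cdot\|+\delta+\scal{\cdot}{v}$ so that $g=\widetilde{\varphi}$ via Lemma~\ref{levU74gdnx-27}, compute $\varphi^*=\phi^*\circ\|\cdot-v\|-\delta$ (so $\dom\varphi^*=\GG$ is open), invoke Theorem~\ref{tevU74gdnx-06}, and collapse the inclusion to the scalar equation $\psi(t)=\|y/\gamma-v\|$ by taking norms. Two local differences are worth noting. For the case $y=\gamma v$ you argue directly from $\nabla\varphi^*(v)=0$ and the uniqueness in Theorem~\ref{tevU74gdnx-06}\ref{tevU74gdnx-06ii}, whereas the paper runs a projection/normal-cone argument through Remark~\ref{rkIjiiIus3W6-17}; your version is shorter and equally valid. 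More substantively, for the invertibility of $\psi$ the paper asserts that $\theta\colon s\mapsto\tfrac12(\phi^{*2}(s)+s^2)+(\eta/\gamma-\delta)\phi^*(s)$ is strongly convex because $\phi^*$ is convex, but when $\eta/\gamma-\delta<0$ the last term is concave and $\theta$ (hence global monotonicity of $\psi$) can fail — e.g.\ $\phi^*=|\cdot|^2/2$ with $\delta-\eta/\gamma>1$. Your argument sidesteps this: you restrict to the half-line $[s_0,\pinf)$ on which $\phi^*(\cdot)+\eta/\gamma-\delta\geq 0$, observe that $\psi$ is there the identity plus a product of nonnegative nondecreasing functions, hence a strictly increasing bijection of $[s_0,\pinf)$ onto itself, and check that both $\|y/\gamma-v\|$ and $t=\|p-v\|$ land in that half-line (the latter because $\chi>0$). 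This is more careful than the paper's justification and is what the formula actually requires; the only caveat is that it proves invertibility of $\psi$ where it is needed rather than the blanket assertion ``$\psi$ is invertible'' in the statement, which should be read in that restricted sense.
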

\begin{proof}
This is a special case of Theorem~\ref{tevU74gdnx-06} with
$\varphi=\phi\circ\|\cdot\|+\delta+\scal{\cdot}{v}$. Indeed, as
shown in \cite[Example~3.6]{Scda16a}, \eqref{eevU74gdnx-13a} is 
a special case of \eqref{eevU74gdnx-17A}. Hence, we 
derive from Lemma~\ref{levU74gdnx-27} that 
$g=\widetilde{\varphi}\in\Gamma_0(\RR\oplus\GG)$.
Next, we obtain from 
\cite[Example~13.7 and Proposition~13.20(iii)]{Livre1} that
\begin{equation}
\label{eevU74gdnx-29c}
\varphi^*=\phi^*\circ\|\cdot-v\|-\delta 
\end{equation}
and therefore that
\begin{equation}
\label{eevU74gdnx-29t}
\nabla\varphi^*\colon\GG\to\GG\colon z\mapsto
\begin{cases}
\dfrac{\phi^{*'}(\|z-v\|)}{\|z-v\|}(z-v),&\text{if}\;\;z\neq v;\\
0,&\text{if}\;\;z=v.
\end{cases}
\end{equation}
In view of Theorem~\ref{tevU74gdnx-06}, it remains to 
assume that $\eta+\gamma\varphi^*(y/\gamma)>0$, i.e., 
$\eta+\phi^*(\|y/\gamma-v\|)>\gamma\delta$, and to show that 
the point $(t,p)$ provided by \eqref{eevU74gdnx-13c} satisfies 
\begin{equation}
\label{eDACy47Hj-10v}
t=\|p-v\|\quad\text{and}\quad
y=\gamma p+\big(\eta+\gamma\varphi^*(p)\big)\nabla\varphi^*(p).
\end{equation}
We consider two cases:
\begin{itemize}
\item
$y=\gamma v$: Since $\phi$ is an even convex function such that
$\phi(0)=0$, $\phi^*$ has the same properties by
\cite[Propositions~13.18~and~13.19]{Livre1}. Hence, going back to
Remark~\ref{rkIjiiIus3W6-17}, since $\phi^*$ is differentiable, 
the points that have $(\pi,p)=(\delta,v)$
as a projection onto 
$C=\menge{(\mu,u)\in\RR\oplus\GG}{\mu+\phi^*(\|u-v\|)\leq\delta}$
are the points on the ray 
$\menge{(\delta+\lambda,v)}{\lambda\in\RP}$. Thus, we derive from 
\eqref{eqerY76t0k3-10b} that
\begin{multline}
\label{eDACy47Hj-10a}
y=\gamma v\;\Leftrightarrow\;
P_C(\eta/\gamma,y/\gamma)=(\pi,p)=(\delta,v)
\;\Leftrightarrow\;\;p=v\Leftrightarrow\;t=0\;\Leftrightarrow\;\\
\prox_{\gamma \widetilde{\varphi}}(\eta,y)
=(\eta,y)-\gamma(\delta,v)=(\eta-\gamma\delta,y-\gamma p).
\end{multline}
Since $\phi^*(0)=0$, we recover \eqref{eevU74gdnx-13d}.
\item
$y\neq\gamma v$: As seen in \eqref{eDACy47Hj-10a}, $p\neq v$.
Using \eqref{eevU74gdnx-29c} and \eqref{eevU74gdnx-29t}, 
\eqref{eDACy47Hj-10v} can be rewritten as 
\begin{equation}
\label{eevU74gdnx-29d}
t=\|p-v\|\quad\text{and}\quad y-\gamma v=
\gamma(p-v)+\frac{\big(\eta+\gamma\phi^*(\|p-v\|)-\gamma\delta\big)
\phi^{*'}(\|p-v\|)}{\|p-v\|}(p-v),
\end{equation}
that is,
\begin{equation}
\label{eevU74gdnx-29z}
t=\|p-v\|\quad\text{and}\quad y/\gamma-v=
\frac{\|p-v\|+\big(\eta/\gamma-\delta+\phi^*(\|p-v\|)\big)
\phi^{*'}(\|p-v\|)}{\|p-v\|}(p-v).
\end{equation}
In view of \eqref{eevU74gdnx-15x}, this is equivalent to
\begin{equation}
\label{eevU74gdnx-30b}
t=\|p-v\|\quad\text{and}\quad y/\gamma-v=
\frac{\psi(\|p-v\|)}{\|p-v\|}(p-v).
\end{equation}
Upon taking the norm on both sides of the second equality, we obtain
\begin{equation}
\label{eevU74gdnx-30a}
\psi(t)=\psi(\|p-v\|)=\|y/\gamma-v\|.
\end{equation}
We note that, since $\phi^*$ is convex, 
$\psi$ is the derivative of the strongly convex function
\begin{equation}
\label{eevU74gdnx-13g}
\theta\colon s\mapsto\frac{1}{2}\Big(\phi^{*2}(s)+s^2\Big)+
\bigg(\frac{\eta}{\gamma}-\delta\bigg)\phi^*(s).
\end{equation}
Consequently, $\psi$ is strictly increasing
\cite[Proposition~17.13]{Livre1}, hence invertible.
It follows that $t=\psi^{-1}(\|y/\gamma-v\|)$.
In turn, \eqref{eevU74gdnx-30b} yields 
\eqref{eevU74gdnx-13c}.
\end{itemize}
\end{proof}

\begin{example}
\label{exevU74gdnx-15a}
Define
\begin{equation}
\label{eevU74gdnx-15a}
g\colon\RR\oplus\GG\to\RX\colon (\eta,y)\mapsto
\begin{cases}
-\sqrt{\eta^2-\|y\|^2},&\text{if}\;\;\eta>0\;\;\text{and}\;\;
\|y\|\leq\eta;\\
0,&\text{if}\;\;y=0\;\;\text{and}\;\;\eta=0;\\
\pinf,&\text{otherwise,}
\end{cases}
\end{equation}
let $\gamma\in\RPP$, let $\eta\in\RR$, let $y\in\GG$, and define
\begin{equation}
\label{eevU74gdnx-15y}
\psi\colon s\mapsto\bigg(2+\frac{\eta}{\gamma\sqrt{1+s^2}}\bigg)s.
\end{equation}
If $\eta+\sqrt{\gamma^2+\|y\|^2}>0$, set 
\begin{equation}
\label{eevU74gdnx-15i}
p=
\begin{cases}
\dfrac{t}{\|y\|}y,&\text{if}\;\;y\neq 0;\\
0,&\text{if}\;\;y=0,
\end{cases}
\qquad\text{where}\quad
t=\psi^{-1}\bigg(\frac{\|y\|}{\gamma}\bigg).
\end{equation}
Then 
\begin{equation}
\label{eevU74gdnx-15d}
\prox_{\gamma g}(\eta,y)=
\begin{cases}
\Big(\eta+\gamma\sqrt{1+t^2},y-\gamma p\Big),&\text{if}\;\;
\eta+\sqrt{\gamma^2+\|y\|^2}>0;\\
(0,0),&\text{if}\;\;
\eta+\sqrt{\gamma^2+\|y\|^2}\leq 0.
\end{cases}
\end{equation}
\end{example}
\begin{proof}
This is a special case of Corollary~\ref{cevU74gdnx-29a} with 
with $\delta=0$, $v=0$, and 
\begin{equation}
\label{eevU74gdnx-15b}
\phi\colon s\mapsto
\begin{cases}
-\sqrt{1-s^2},&\text{if}\;\;|s|\leq 1;\\
\pinf,&\text{otherwise.}
\end{cases}
\end{equation}
It follows from \cite[Example~13.2(vi) and Corollary~13.33]{Livre1} 
that $\phi^*\colon s\mapsto\sqrt{1+s^2}$. Hence, 
$\phi^{*'}\colon s\mapsto s/\sqrt{1+s^2}$ and we derive
\eqref{eevU74gdnx-15d} from \eqref{eevU74gdnx-13d}.
\end{proof}

\begin{example}
\label{exevU74gdnx-10a}
Let $v\in\GG$, let $\delta\in\RR$, 
let $\alpha\in\RPP$, let $q\in\left]1,\pinf\right[$, and 
consider the function
\begin{equation}
\label{eevU74gdnx-10a}
g\colon\RR\oplus\GG\to\RX\colon(\eta,y)\mapsto
\begin{cases}
\Frac{\|y\|^q}{\alpha\eta^{q-1}}+\delta\eta+\scal{y}{v},
&\text{if}\;\;\eta>0; \\
0,&\text{if}\;\;y=0\;\text{and}\;\eta=0;\\
\pinf, &\text{otherwise.}
\end{cases}
\end{equation}
Let $\gamma\in\RPP$, set $q^*=q/(q-1)$, set 
$\varrho=(\alpha(1-1/q^*))^{q^*-1}$, and take 
$\eta\in\RR$ and $y\in\GG$. If
$q^*\gamma^{q^*-1}\eta+\varrho\|y\|^{q^*}>\gamma\delta$ and
$y\neq \gamma v$, let $t$ be the unique 
solution in $\RPP$ to the equation
\begin{equation}
\label{eevU74gdnx-10f}
s^{2q^*-1}+\frac{q^*(\eta-\gamma\delta)}{\gamma\varrho}s^{q^*-1}
+\frac{q^*}{\varrho^2}s-\frac{q^*\|y-\gamma v\|}{\gamma\varrho^2}=0
\end{equation}
and set
\begin{equation}
\label{eevU74gdnx-10x}
p=
\begin{cases}
v+\dfrac{t}{\|y-\gamma v\|}(y-\gamma v),
&\text{if}\;\;y\neq\gamma v;\\
v,&\text{if}\;\;y=\gamma v.
\end{cases}
\end{equation}
Then
\begin{equation}
\label{eevU74gdnx-10b}
\prox_{\gamma g}(\eta,y)=
\begin{cases}
\big(\eta+\gamma(\varrho t^{q^*}-\delta)/q^*,y-\gamma p\big),
&\text{if}\;\;q^*\gamma^{q^*-1}\eta+\varrho\|y\|^{q^*}
>\gamma\delta;\\
\big(0,0\big),&\text{if}\;\;q^*\gamma^{q^*-1}\eta+
\varrho\|y\|^{q^*}\leq\gamma\delta.
\end{cases}
\end{equation}
\end{example}
\begin{proof}
This is a special case of Corollary~\ref{cevU74gdnx-29a} with
$\phi=|\cdot|^q/\alpha$. Indeed, we derive from 
\cite[Example~13.2(i) and Proposition~13.20(i)]{Livre1} 
that $\phi^*=\varrho|\cdot|^{q^*}/q^*$, which implies that 
\eqref{eevU74gdnx-10x}--\eqref{eevU74gdnx-10b} follow from 
\eqref{eevU74gdnx-13d}.
\end{proof}

\begin{example}
\label{exevU74gdnx-12a}
Let $v\in\GG$, let $\alpha\in\RPP$, let $\delta\in\RR$, 
and consider the function
\begin{equation}
\label{eevU74gdnx-12x}
g\colon\RR\oplus\GG\to\RX\colon(\eta,y)\mapsto
\begin{cases}
\Frac{\|y\|^2}{\alpha\eta}+\delta\eta+\scal{y}{v},
&\text{if}\;\;\eta>0; \\
0, &\text{if}\;\;y=0\;\text{and}\;\eta=0;\\
\pinf, &\text{otherwise.}
\end{cases}
\end{equation}
We obtain a special case of Example~\ref{exevU74gdnx-10a} with
$q=q^*=2$. Now let $\gamma\in\RPP$, and take 
$\eta\in\RR$ and $y\in\GG$.
If $4\gamma\eta+\alpha\|y\|^2\leq 2\gamma\delta$, then
$\prox_{\gamma g}(\eta,y)=(0,0)$. Suppose that 
$4\gamma\eta+\alpha\|y\|^2>2\gamma\delta$.
First, if $y=\gamma v$, then 
$\prox_{\gamma g}(\eta,y)=(\eta-\gamma\delta/2,0)$. 
Next, suppose that $y\neq\gamma v$ and let $t$ be the unique 
solution in $\RPP$ to the depressed cubic equation 
\begin{equation}
\label{eevU74gdnx-10p}
s^3+\frac{4\alpha(\eta-\gamma\delta)+8\gamma}{\alpha^2\gamma}s
-\frac{8\|y-\gamma v\|}{\alpha^2\gamma}=0.
\end{equation}
Then we derive from 
\eqref{eevU74gdnx-10x}--\eqref{eevU74gdnx-10b} that
\begin{equation}
\label{eDACy47Hj-13b}
\prox_{\gamma g}(\eta,y)=
\bigg(\eta+\dfrac{\gamma}{2}\bigg(\dfrac{\alpha
t^{2}}{2}-\delta\bigg),\bigg(1-
\dfrac{\gamma t}{\|y-\gamma v\|}\bigg)(y-\gamma v)\bigg).
\end{equation}
Note that \eqref{eevU74gdnx-10p} can be solved explicitly via 
Cardano's formula \cite[Chapter~4]{Birk77} to obtain $t$.
\end{example}

We conclude this subsection by investigating integral functions
constructed from integrands that are perspective functions.

\begin{proposition}
\label{pkIjiiIus3W9-19}
Let $(\Omega,{\EuScript F},\mu)$ be a measure space, let
${\mathsf G}$ be a separable real Hilbert space, and let 
$\varphi\in\Gamma_0(\mathsf{G})$. Set
$\HH=L^2((\Omega,{\EuScript F},\mu);\RR)$ and
$\GG=L^2((\Omega,{\EuScript F},\mu);\mathsf{G})$, and suppose
that $\mu(\Omega)<\pinf$ or $\varphi\geq\varphi(0)=0$. 
For every $x\in\HH$, set 
$\Omega_0(x)=\menge{\omega\in\Omega}{x(\omega)=0}$ and
$\Omega_+(x)=\menge{\omega\in\Omega}{x(\omega)>0}$.
Define 
\begin{multline}
\label{e:rodos2016-08-02b}
\Phi\colon\HH\oplus\GG\to\RX\colon (x,y)\mapsto\\
\begin{cases}
\displaystyle{\int_{\Omega_0(x)}}
\big(\rec\varphi\big)\big(y(\omega)\big)\mu(d\omega)\!\!\!&+
\displaystyle{\int_{\Omega_+(x)}}x(\omega)
\varphi\bigg(\dfrac{y(\omega)}{x(\omega)}\bigg)\mu(d\omega),\\[5mm]
&\text{if}\;\;
\begin{cases}
x\geq 0\;\:\mu\text{-a.e.}\\
(\rec\varphi)(y)1_{\Omega_0(x)}+x\varphi(y/x)1_{\Omega_+(x)}\in 
L^1\big((\Omega,\mathcal{F},\mu);\RR\big);\\[3mm]
\end{cases}\\
\pinf,&\text{otherwise.}
\end{cases}
\end{multline}
Now let $x\in\HH$ and $y\in\GG$, and set, for $\mu$-almost every
$\omega\in\Omega$, $(p(\omega),q(\omega))=
\prox_{\widetilde{\varphi}} (x(\omega),y(\omega))$.
Then $\prox_{\Phi} (x,y)=(p,q)$.
\end{proposition}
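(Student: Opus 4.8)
The plan is to recognize the statement as a direct instance of Lemma~\ref{lkIjiiIus3W4-17} applied to the separable real Hilbert space $\mathsf{K}=\RR\oplus\mathsf{G}$ and to the integrand $\psi=\widetilde{\varphi}$. First I would record that $\RR\oplus\mathsf{G}$ is separable (since $\mathsf{G}$ is) and that $\widetilde{\varphi}\in\Gamma_0(\RR\oplus\mathsf{G})$ by Lemma~\ref{levU74gdnx-21}\ref{levU74gdnx-21ii}, so $\psi$ is an admissible integrand. Under the canonical isometric isomorphism $L^2((\Omega,{\EuScript F},\mu);\RR\oplus\mathsf{G})\simeq\HH\oplus\GG$, the space $\KK$ appearing in Lemma~\ref{lkIjiiIus3W4-17} is exactly $\HH\oplus\GG$, and the pointwise definition $(p(\omega),q(\omega))=\prox_{\widetilde{\varphi}}(x(\omega),y(\omega))$ is nothing but the pointwise prox $p(\omega)=\prox_{\psi}(x,y)(\omega)$ of that lemma.

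Next I would transfer the standing hypothesis. One has $\widetilde{\varphi}(0,0)=(\rec\varphi)(0)=0$, and if $\varphi\geq\varphi(0)=0$ then, reading off \eqref{eevU74gdnx-08l} together with the limit formula in \eqref{e4RJ3hjhhjeTr6p9-23a}, we get $\widetilde{\varphi}\geq 0$: indeed $\eta\varphi(y/\eta)\geq 0$ for $\eta>0$, while $(\rec\varphi)(y)=\lim_{\alpha\to\pinf}\varphi(z+\alpha y)/\alpha\geq 0$ for $\eta=0$. Hence ``$\mu(\Omega)<\pinf$ or $\varphi\geq\varphi(0)=0$'' entails ``$\mu(\Omega)<\pinf$ or $\psi\geq\psi(0)=0$,'' which is precisely what Lemma~\ref{lkIjiiIus3W4-17} requires (together with the routine observation that replacing $\mu$ by its completion, and noting $\sigma$-finiteness when $\mu(\Omega)<\pinf$, changes neither $\Phi$, nor the relevant $L^1$ class, nor $L^2$).

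The one genuinely computational step is to check that the function $\Phi$ defined by \eqref{e:rodos2016-08-02b} coincides with the integral functional produced by \eqref{ekIjiiIus3W4-17A} for $\psi=\widetilde{\varphi}$, namely $(x,y)\mapsto\int_\Omega\widetilde{\varphi}(x(\omega),y(\omega))\mu(d\omega)$ on the set where $\widetilde{\varphi}\circ(x,y)\in L^1$ and $\pinf$ elsewhere. Here I would note that $\Omega_0(x)=x^{-1}(\{0\})$ and $\Omega_+(x)=x^{-1}(\RPP)$ are measurable and that $\omega\mapsto\widetilde{\varphi}(x(\omega),y(\omega))$ is measurable because $\widetilde{\varphi}$ is lower semicontinuous, hence Borel; then I split the domain of integration. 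Since $\widetilde{\varphi}(\xi,\upsilon)=\pinf$ whenever $\xi<0$, finiteness of the integral forces $x\geq 0$ $\mu$-a.e.; on $\Omega_0(x)$ the integrand is $(\rec\varphi)(y(\omega))$ and on $\Omega_+(x)$ it is $x(\omega)\varphi(y(\omega)/x(\omega))$, which reproduces both the value and the effective domain in \eqref{e:rodos2016-08-02b}, including the equivalence of the two integrability conditions. Once this identification is made, Lemma~\ref{lkIjiiIus3W4-17} yields at once that $\prox_\Phi(x,y)=(p,q)$ with $(p(\omega),q(\omega))=\prox_{\widetilde{\varphi}}(x(\omega),y(\omega))$ $\mu$-a.e., which is the assertion. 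The only obstacle is this bookkeeping: making the reduction of \eqref{e:rodos2016-08-02b} to the integral-functional form of Lemma~\ref{lkIjiiIus3W4-17} airtight (measurability of $\Omega_0(x)$, $\Omega_+(x)$ and of the composite integrand, the sign/domain case analysis, and the equivalence of the $L^1$ conditions); no further argument is needed.
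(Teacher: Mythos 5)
Your proposal is correct and follows essentially the same route as the paper: reduce to Lemma~\ref{lkIjiiIus3W4-17} with $\mathsf{K}=\RR\oplus\mathsf{G}$ and $\psi=\widetilde{\varphi}$, using Lemma~\ref{levU74gdnx-21}\ref{levU74gdnx-21ii} for $\widetilde{\varphi}\in\Gamma_0(\RR\oplus\mathsf{G})$. The only difference is that the paper delegates the identification $\Phi(x,y)=\int_\Omega\widetilde{\varphi}(x(\omega),y(\omega))\,\mu(d\omega)$ and the fact that $\Phi\in\Gamma_0(\HH\oplus\GG)$ to \cite[Proposition~5.1]{Scda16a}, whereas you carry out that bookkeeping (and the transfer of the hypothesis $\widetilde{\varphi}\geq\widetilde{\varphi}(0,0)=0$) by hand, which is a sound and self-contained substitute.
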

\begin{proof}
Set $z=(x,y)$. It follows from 
Lemma~\ref{levU74gdnx-21}\ref{levU74gdnx-21ii} 
that $\widetilde{\varphi}\in\Gamma_0(\RR\oplus\mathsf{G})$, 
and \cite[Proposition~5.1]{Scda16a} asserts that $\Phi$ is a 
well-defined function in $\Gamma_0(\RR\oplus\GG)$ with 
\begin{equation}
\label{e:245hg2}
\Phi(z)=\int_{\Omega}\widetilde{\varphi}\big(z(\omega)\big)
\mu(d\omega).
\end{equation}
Therefore, the result is obtained by applying 
Lemma~\ref{lkIjiiIus3W4-17} with $\mathsf{K}=\RR\oplus\mathsf{G}$
and $\KK=\HH\oplus\GG$.
\end{proof}

\begin{remark}
\label{rkIjiiIus3W9-19}
Proposition~\ref{pkIjiiIus3W9-19} provides a general setting for
computing the proximity operators of abstract integral functionals
by reducing it to the computation of the proximity operator of
the integrand. In particular,
by suitably choosing the underlying measure space and the
integrand, it provides a framework for computing the proximity
operators of the integral function based on perspective functions
discussed in \cite{Scda16a}, which include general divergences.
For instance, discrete $N$-dimensional divergences are obtained by
setting
$\Omega=\{1,\ldots,N\}$ and $\mathcal{F}=2^\Omega$, and letting
$\mu$ be the counting measure (hence $\HH=\GG=\RR^N$) and
$\mathsf{G}=\RR$. While completing the present paper, it has come
to our attention that the computation of the proximity operators of
discrete divergences has also been recently addressed in
\cite{Pesq16}. 
\end{remark}

\subsection{Further results}
\label{subsec:further}
A convenient assumption in 
Theorem~\ref{tevU74gdnx-06}\ref{tevU74gdnx-06ii} is that
$\dom\varphi^*$ is open, as it allowed us to rule out the
case when 
\begin{equation}
\label{eQrs54tGhfnnUjH0-09a}
\prox_{\gamma\widetilde{\varphi}}(\eta,y)=(0,q)
\quad\text{and}\quad q\neq 0,
\end{equation}
and to reduce \eqref{e:hj4mB} to 
\eqref{eevU74gdnx-12b} using \eqref{ekIjiiIus3W9-15}.
In general, \eqref{ekIjiiIus3W9-15} has the form 
$N_{\dom h}(\pi,p)=\{0\}\times N_{\dom\varphi^*}p$ and, if 
$\dom\varphi^*$ is simple enough, explicit expressions can still be
obtained. To shed more light on the case \eqref{eQrs54tGhfnnUjH0-09a}, 
consider the scenario in which $q\neq 0$ and 
$\dom\varphi^*$ is closed, and 
set $p=(y-q)/\gamma$. Then, in view of \eqref{e:jjm}, 
\eqref{eQrs54tGhfnnUjH0-09a}
yields $(\eta/\gamma,p)\in\partial\widetilde{\varphi}(0,q)$. In
turn, we derive from \eqref{ekIjiiIus3W9-23x} that
\begin{equation}
\label{eQrs54tGhfnnUjH0-09b}
\varphi^*(p)\leq -\eta/\gamma\quad\text{and}\quad
\sigma_{\dom\varphi^*}(q)=\scal{p}{q}.
\end{equation}
Thus, 
\begin{equation}
\label{eQrs54tGhfnnUjH0-09c}
p\in {\dom\varphi^*}\quad\text{and}\quad
(\forall z\in {\dom\varphi^*})\quad
\scal{z-p}{y/\gamma-p}\leq 0,
\end{equation}
and we infer from \eqref{e:kolmogorov} that 
$p=P_{\dom\varphi^*}(y/\eta)$. Therefore, 
\begin{equation}
\label{eQrs54tGhfnnUjH0-09e}
\prox_{\gamma\widetilde{\varphi}}(\eta,y)=
\big(0,y-\gamma P_{\dom\varphi^*}(y/\eta)\big)=
\big(0,y-P_{\gamma\dom\varphi^*}y\big)
\end{equation}
and we note that the condition $q\neq 0$ means that
$y\notin\gamma\,{\dom\varphi^*}$. 
We provide below examples in which $\dom\varphi^*$ is a simple
proper closed subset of $\GG$ and the proximity operator of the
perspective function of $\varphi$ can be computed explicitly.

\begin{example}
\label{exQrs54tGhfnnUjH0-01}
Suppose that $D\neq\{0\}$ is a nonempty closed convex cone in $\GG$ 
and define 
\begin{equation}
\label{eQrs54tGhfnnUjH0-03p}
\varphi=\vartheta+\iota_D,\quad\text{where}\quad
\vartheta=\sqrt{1+\|\cdot\|_{\GG}^2}.
\end{equation}
Since $\dom\vartheta=\GG$, we have $\varphi^*=(\vartheta+\iota_D)^*
=\vartheta^*\infconv\iota_{D^\ominus}$, where $D^\ominus$ is the
polar cone of $D$ and (combine 
\cite[Examples~13.2(vi) and 13.7]{Livre1})
\begin{equation}
\label{eQrs54tGhfnnUjH0-01b}
\vartheta^*\colon\GG\to\RX\colon u\mapsto
\begin{cases}
-\sqrt{1-\|u\|_\GG^2},&\text{if}\;\;\|u\|_\GG\leq 1;\\
\pinf,&\text{if}\;\;\|u\|_\GG>1.
\end{cases}
\end{equation}
Thus, 
$\dom\varphi^*=\dom(\vartheta^*\infconv\iota_{D^\ominus})
=\dom\vartheta^*+\dom\iota_{D^\ominus}=B(0;1)+D^\ominus$ is
closed as the sum of two closed convex sets, one of which is
bounded. As a result, since $D^\ominus\neq\GG$, 
\begin{equation}
\label{eQrs54tGhfnnUjH0-03w}
\dom\varphi^*\;\text{is a proper closed subset of}\;\GG.
\end{equation}
Now set $\KK=\RR\oplus\GG$ and $K=\RP\times D$, and let
$\gamma\in\RPP$, $\eta\in\RR$, and $y\in\GG$. Then
$\|(\eta,y)\|_\KK=\sqrt{|\eta|^2+\|y\|_\GG^2}$
and, as shown in \cite[Example~3.5]{Scda16a}, 
\begin{equation}
\label{eQrs54tGhfnnUjH0-03u}
\widetilde{\varphi}=\|\cdot\|_\KK+\iota_K.
\end{equation}
Hence, we derive from \eqref{eQrs54tGhfnnUjH0-03g} that 
\begin{equation}
\label{eQrs54tGhfnnUjH0-03x}
\prox_{\gamma\widetilde{\varphi}}(\eta,y)=
\begin{cases}
(0,0),&\text{if}\;\;\|P_{K}(\eta,y)\|_\KK\leq\gamma;\\[3mm]
\bigg(1-\dfrac{\gamma}{\|P_{K}(\eta,y)\|_\KK}\bigg)
P_{K}(\eta,y),&\text{if}\;\;\|P_{K}(\eta,y)\|_\KK>\gamma.
\end{cases}
\end{equation}
We thus obtain an explicit expression as soon as $P_K$ is explicit
although $\dom\varphi^*$ is not open. As an illustration, 
let $N\geq 2$ be an integer, set $\GG=\RR^{N-1}$, let 
$D=\RP^{N-1}$, and denote by $\|\cdot\|_{N}$ the usual 
$N$-dimensional
Euclidean norm. Then $\varphi=\sqrt{1+\|\cdot\|_{N-1}^2}+\iota_D$,
$K=\RP^N$, and \eqref{eQrs54tGhfnnUjH0-03x} becomes 
\begin{align}
\label{eQrs54tGhfnnUjH0-01e}
\prox_{\gamma\widetilde{\varphi}}(\eta,y)=
\begin{cases}
(0,0),&\text{if}\;\;\|(\eta_+,y_+)\|_N\leq\gamma;\\[3mm]
\bigg(1-\dfrac{\gamma}{\|(\eta_+,y_+)\|_N}\bigg)(\eta_+,y_+),
&\text{if}\;\;\|(\eta_+,y_+)\|_N>\gamma,
\end{cases}
\end{align}
where $\eta_+=\max\{0,\eta\}$ and $y_+$ is defined likewise
componentwise.
\end{example}

The second example provides the proximity operator of the
perspective function of the Huber function.

\begin{example}[perspective of the Huber function]
\label{exQrs54tGhfnnUjH0-09}
Following \cite[Example~3.2]{Scda16a}, let $\rho\in\RPP$ and 
consider the perspective function 
\begin{equation}
\label{ekIjiiIus3W9-24c}
\widetilde{\varphi}\colon\RR^2\to\RX\colon (\eta,y)\mapsto
\begin{cases}
\rho|y|-\dfrac{\eta\rho^2}{2},&\text{if}\;\;|y|>\eta\rho\;\;
\text{and}\;\;\eta>0;\\
\dfrac{|y|^2}{2\eta},&\text{if}\;\;|y|\leq\eta\rho\;\;
\text{and}\;\;\eta>0;\\
\rho|y|,&\text{if}\;\;\eta=0;\\
\pinf,&\text{if}\;\;\eta<0
\end{cases}
\end{equation}
of the Huber function 
\begin{equation}
\label{e:rodos2015-08-12}
\varphi\colon\RR\to\RX\colon y\mapsto
\begin{cases}
\rho|y|-\Frac{\rho^2}{2}, &\text{if}\;\;|y|>\rho;\\[+2mm]
\Frac{|y|^2}{2},&\text{if}\;\;|y|\leq\rho.
\end{cases}
\end{equation}
Then $\varphi^*=|\cdot|^2/2+\iota_{[-\rho,\rho]}$ and 
$\dom\varphi^*$ is therefore a proper closed subset of $\RR$.
In addition, \eqref{eqerY76t0k3-10A} yields
\begin{equation}
C=\menge{(\mu,u)\in\RM\times[-\rho,\rho]}{\mu+|u|^2/2\leq 0}.
\end{equation}
Now let $\eta\in\RR$, let $y\in\RR$, and set
$(\chi,q)=\prox_{\gamma\widetilde{\varphi}}(\eta,y)$.  
Then the following hold:
\begin{enumerate}
\item
If $\eta+|y|^2/(2\gamma)\leq 0$ and $|y|\leq\gamma\rho$, then 
Theorem~\ref{tevU74gdnx-06}\ref{tevU74gdnx-06i} yields
$(\chi,q)=(0,0)$.
\item
We have $\chi=0$ $\Leftrightarrow$ $\eta/\gamma\leq-\rho^2/2$. 
Hence, if $\eta\leq-\gamma\rho^2/2$ and $|y|>\gamma\rho$, 
\eqref{eQrs54tGhfnnUjH0-09e} yields
$(\chi,q)=(0,y-P_{[-\gamma\rho,\gamma\rho]}y)=
(0,y-\gamma\rho\,\sign(y))$.
\item
If $\eta>-\gamma\rho^2/2$ and $|y|>\rho\eta+\gamma\rho(1+\rho^2/2)$, 
then $(\eta/\gamma,y/\gamma)\in(-\rho^2/2,\rho\,\sign(y))+
N_C(-\rho^2/2,\rho\,\sign(y))$ and therefore 
$P_C(\eta/\gamma,y/\gamma)=(-\rho^2/2,\rho\,\sign(y))$. Hence, 
\eqref{eqerY76t0k3-10b} yields
$(\chi,q)=(\eta+\gamma\rho^2/2,y-\gamma\rho\,\sign(y))$.
\item
If $\eta>-\gamma\rho^2/2$ and 
$|y|\leq\rho\eta+\gamma\rho(1+\rho^2/2)$, 
then $(\chi,q)=\prox_{\gamma[|\cdot|^2/2]^\sim}(\eta,y)$ 
is obtained by setting $v=0$, $\delta=0$, and $\alpha=2$ in 
Example~\ref{exevU74gdnx-12a}.  
\end{enumerate}
\end{example}

The last example concerns the Vapnik loss function.
\begin{example}[perspective of the Vapnik function]
\label{exQrs54tGhfnnUjH0-11}
Following \cite[Example~3.4]{Scda16a}, let $\varepsilon\in\RPP$ and 
consider the perspective function 
\begin{equation}
\label{eQrs54tGhfnnUjH0-11c}
\widetilde{\varphi}\colon\RR^2\to\RX\colon(\eta,y)\mapsto
\begin{cases}
d_{[-\varepsilon\eta,\varepsilon\eta]}(y),&\text{if}\;\;
\eta\geq 0;\\
\pinf,&\text{if}\;\;\eta<0
\end{cases}
\end{equation}
of the Vapnik $\varepsilon$-insensitive loss function \cite{Vapn00}
\begin{equation}
\label{eQrs54tGhfnnUjH0-11d}
\varphi=\text{max}\{|\cdot|-\varepsilon,0\}.
\end{equation}
We have
$\varphi=d_{[-\varepsilon,\varepsilon]}=
\iota_{[-\varepsilon,\varepsilon]}
\infconv|\cdot|$ 
and therefore $\varphi^*=\varepsilon|\cdot|+\iota_{[-1,1]}$.
Furthermore, \eqref{eqerY76t0k3-10A} becomes
\begin{equation}
C=\menge{(\mu,u)\in\RM\times[-1,1]}{\mu+\varepsilon|u|\leq 0}.
\end{equation}
Now let $\eta\in\RR$, let $y\in\RR$, and set
$(\chi,q)=\prox_{\gamma\widetilde{\varphi}}(\eta,y)$.  
Then the following hold:
\begin{enumerate}
\item
If $\eta+\varepsilon|y|\leq 0$ and $|y|\leq\gamma$, then 
Theorem~\ref{tevU74gdnx-06}\ref{tevU74gdnx-06i} yields
$(\chi,q)=(0,0)$.
\item
We have $\chi=0$ $\Leftrightarrow$ $\eta/\gamma\leq-\varepsilon$. 
Hence, if $\eta\leq-\gamma\varepsilon$ and $|y|>\gamma$, 
\eqref{eQrs54tGhfnnUjH0-09e} yields
$(\chi,q)=(0,y-P_{[-\gamma,\gamma]}y)=
(0,y-\gamma\,\sign(y))$.
\item
If $\eta>-\gamma\varepsilon$ and
$|y|>\varepsilon\eta+\gamma(1+\varepsilon^2)$, then
$(\eta/\gamma,y/\gamma)\in(-\varepsilon,\sign(y))+
N_C(-\varepsilon,\sign(y))$ and therefore 
$P_C(\eta/\gamma,y/\gamma)=(-\varepsilon,\sign(y))$. 
Hence, \eqref{eqerY76t0k3-10b} yields
$(\chi,q)=(\eta+\gamma\varepsilon,y-\gamma\,\sign(y))$.
\item
If $|y|>-\eta/\varepsilon$ and
$\varepsilon\eta\leq|y|\leq\varepsilon\eta+\gamma(1+\varepsilon^2)$, 
then $P_C(\eta/\gamma,y/\gamma)$ coincides with the projection of
$(\eta/\gamma,y/\gamma)$ onto the half-space with outer normal 
vector $(1,\varepsilon\,\sign(y))$ and which has the origin on
its boundary. As a result, 
\eqref{eqerY76t0k3-10b} yields
$(\chi,q)=((\eta+\varepsilon|y|)/(1+\varepsilon^2),
\varepsilon(\eta+\varepsilon|y|)\sign(y)/(1+\varepsilon^2))$.
\item
If $\eta\geq 0$ and
$|y|\leq\varepsilon\eta$, then
$P_C(\eta/\gamma,y/\gamma)=(0,0)$ and 
\eqref{eqerY76t0k3-10b} yields $(\chi,q)=(\eta,y)$.
\end{enumerate}
\end{example}

\section{Applications in high-dimensional statistics}
\label{sec:4}
Sections~\ref{sec:2} and \ref{sec:3}
provide a unifying framework to model a variety of problems around
the notion of a perspective function. By applying the results of 
Section~\ref{sec:3} in existing proximal algorithms, we obtain
efficient methods to solve complex problems. To illustrate this
point, we focus on a specific application area: high-dimensional 
regression in the statistical linear model. 

\subsection{Penalized linear regression}
We consider the standard statistical linear model
\begin{equation}
\label{e:linmodel}
z=Xb+\sigma e,
\end{equation}
where $z=(\zeta_i)_{1\leq i\leq n}\in\RR^n$ is the response, 
$X\in\RR^{n \times p}$ a
design (or feature) matrix, $b=(\beta_j)_{1\leq j\leq p}\in\RR^p$ 
a vector of regression coefficients, $\sigma\in\RPP$, 
and $e=(\varepsilon_i)_{1\leq i\leq n}$ the noise vector; 
each $\varepsilon_i$ 
is the realization of a random variable with 
mean zero and variance $1$. Henceforth, we denote by $X_{i:}$ the
$i$th row of $X$ and by $X_{:j}$ the $j$th column of $X$.  In the
high-dimensional setting where $p > n$, a typical assumption
about the regression vector $b$ is sparsity. In this scenario,
the Lasso \cite{Tibshirani1996} has become a fundamental tool for
variable
selection and predictive modeling. It is based on solving
the penalized least-squares problem
\begin{equation}
\label{e:lasso}
\minimize{b\in\RR^p}{\frac{1}{2n}\|Xb-z\|^2_2+\lambda\|b\|_1},
\end{equation}
where $\lambda\in\RP$ is a regularization parameter that aims at
controlling the sparsity of the solution. The Lasso has strong 
performance guarantees in terms of support recovery, estimation, 
and predictive performance if one takes 
$\lambda\propto\sigma\|X^\top e\|_\infty$. 
In the high-dimensional setting, two shortcomings of the Lasso are
the introduction of bias in the final estimates due to the $\ell^1$
norm and lack of knowledge about the quantity $\sigma$ which
necessitates proper tuning of $\lambda$ via model selection
strategies that is dependent on $\sigma$. Bias reduction can be
achieved by using a properly weighted $\ell^1$ norm, resulting in
the adaptive Lasso \cite{Zhou2006} formulation
\begin{equation}
\label{e:alasso}
\minimize{b\in\RR^p}{\frac{1}{2n}\|Xb-z\|^2_2+
\lambda\sum_{j=1}^{p}w_j|\beta_j|},
\end{equation}
where the fixed weights $w_j\in\RPP$ are estimated from data. In
\cite{Zhou2006}, it was shown that, for suitable choices of $w_j$,
the adaptive Lasso produces (asymptotically) unbiased estimates of
$b$. One of the first methods to alleviate the $\sigma$-dependency
of the Lasso has been the Sqrt-Lasso \cite{Belloni2011}. The 
Sqrt-Lasso problem is based on the formulation
\begin{equation}
\label{e:sqrtlasso}
\minimize{b\in\RR^p}{\frac{1}{2}\|Xb-z\|_2+
\lambda\|b\|_1}.
\end{equation}
This optimization problem can be cast as second order 
cone program (SOCP) \cite{Belloni2011}. The modification of the 
objective function can be interpreted as an (implicit) scaling 
of the Lasso objective function by an estimate 
$\|X b-z\|_2/\sqrt{n}$ of $\sigma$ \cite{Lederer2015}, leading to
\begin{equation}
\label{e:sqrtlasso2}
\minimize{b\in\RR^p}{\frac{1}{2\sqrt{n}}\frac{\|Xb-z\|^2_2}
{\Frac{1}{\sqrt{n}}\|Xb-z\|_2}+\lambda\|b\|_1}.
\end{equation}
In \cite{Belloni2011}, it was shown that the tuning parameter
$\lambda$ does not depend on $\sigma$ in Sqrt-Lasso. 

Alternative approaches rely on the idea of simultaneously and
explicitly estimating $b$ and $\sigma$ from the data. The scaled
Lasso \cite{Sun2012}, a robust hybrid of ridge and Lasso regression
\cite{Owen07}, and the TREX \cite{Lederer2015} are important
instances. In the following, we will show that these estimators are
based on perspective functions under the unifying statistical
framework of concomitant estimation. We will introduce a novel
family of estimators and show how the corresponding optimization
problems can be solved using proximal algorithms. In particular, we
will derive novel proximal algorithms for solving both the 
standard TREX and a novel generalized version of the TREX 
which includes the Sqrt-Lasso as special case.

\subsection{Penalized concomitant M-estimators}
In statistics, the task of simultaneously estimating a regression
vector $b$ and an additional model parameter is referred to as
concomitant estimation.  In \cite{Huber1981}, Huber introduced a
generic method for formulating ``maximum likelihood-type"
estimators (or M-estimators) with a concomitant parameter from a
convex criterion. Using our perspective function framework, we can
extend this framework and introduce the class of penalized
concomitant M-estimators defined through the convex optimization
problem 
\begin{equation}
\label{e:concom}
\minimize{\sigma\in\RR,\,\tau\in\RR,\,b\in\RR^p}
{\sum_{i=1}^n\widetilde{\varphi_i}\big(\sigma,X_{i:}b-\zeta_i\big)
+\sum_{j=1}^p\widetilde{\psi}_j\big(\tau,a_j^\top b\big)},
\end{equation}
with concomitant variables $\sigma$ and $\tau$ under the
assumptions outlined in Theorem~\ref{tevU74gdnx-06} and in
Section~\ref{subsec:further}. Here,
$\varphi_i\in\Gamma_0(\RR)$,
$\psi_j\in\Gamma_0(\RR)$, and $a_j\in\RR^p$.
The terms
$\widetilde{\varphi_i}$ are data fitting terms and
$\widetilde{\psi}_j$ are penalty terms. A prominent instance
of this family of estimators is the scaled Lasso \cite{Sun2012} 
formulation 
\begin{equation}
\label{e:scaledlasso}
\minimize{b\in\RR^p,\,\sigma\in\RPP}{\frac{1}{2n}\frac{\|Xb-z\|^2_2}
{\sigma}+\frac{\sigma}{2}+\lambda\|b\|_1},
\end{equation}
which yields estimates equivalent to the Sqrt-Lasso. 
Here, setting $\varphi_i=|\cdot|^2/(2n)+1/2$ and
$\psi_j=\lambda|\cdot|$ leads to the scaled (or concomitant) Lasso
formulation (see Lemma~\ref{levU74gdnx-27},
Corollary~\ref{cevU74gdnx-29a}, and \cite{Ndia16}). Other
function choices result in well-known estimators. For instance,
taking each ${\varphi_i}$ to be the Huber function (see
Example~\ref{exQrs54tGhfnnUjH0-09}) and each ${\psi_j}$ to be the Berhu
(reversed Huber) function recovers the robust Lasso variant,
introduced and discussed in \cite{Owen07}. Setting each
$\psi_j=\lambda|w_j \cdot|$ to be a weighted $\ell^1$ component
results in the ``Huber + adaptive Lasso" estimator, analyzed
theoretically in \cite{Lamb2011}. Note that for the latter two
approaches, no dedicated optimization algorithms exist that can
solve the corresponding optimization problem with provable
convergence guarantees. Combining the proximity operators
introduced here with proximal algorithms enables us to design
such algorithms. To exemplify this powerful framework we focus
next on a particular instance of a penalized concomitant
M-estimator, the TREX estimator, and derive proximity operators and
proximal algorithms. 

\subsection{Proximal algorithms for the TREX}
The TREX \cite{Lederer2015} extends Sqrt-Lasso and scaled Lasso by
taking into account
the unknown noise distribution of $e$. Recalling that a
theoretically desirable tuning parameter for the Lasso is
$\lambda\propto\sigma\|X^\top e\|_\infty$, the TREX scales
the Lasso objective by an estimate of this quantity, namely,
\begin{align}
\label{eq:trex}
\minimize{b\in\RR^p}{\frac{\|Xb-z\|_2^2}{\|X^\top(Xb-z)\|_\infty}
+\alpha\|b\|_1}.
\end{align}
The parameter $\alpha>0$ can be set to a constant value
($\alpha=1/2$ being the default choice). In \cite{Lederer2015},
promising statistical results were reported where an approximate
version of the TREX, with no tuning of $\alpha$, has been shown to
be a valid alternative to the Lasso. A major technical challenge in
the TREX formulation is the non-convexity of the optimization
problem.  In \cite{Bien2016}, this difficulty is overcome by
showing that the TREX problem, although non-convex, can be solved
by observing that problem \eqref{eq:trex} can be equivalently
expressed as finding the best solution to $2p$ convex problems 
of the form
\begin{equation}
\label{eq:convexTREX}
\minimize{\substack{b\in\RR^p\\ x_j^\top(X b-z)>0}}
{\frac{\|Xb-z\|^2_2}{\alpha x_j^\top(X b-z)}+\|b\|_1},
\quad\text{where}\quad
x_j=s X_{:j},\quad\text{with}\quad s\in\{-1,1\}. 
\end{equation}
Each subproblem can be reformulated as a
standard SOCP and numerically solved using generic SOCP solvers
\cite{Bien2016}. 
Next we show how our perspective function approach
allows us to derive proximal algorithms for not only the TREX
subproblems and but also for novel generalized versions of the
TREX. The proximal algorithms construct a sequence
$(b_k)_{k\in\NN}$ that is guaranteed to converge to a solution to
\eqref{eq:convexTREX}.

\subsubsection{Proximal operators for the TREX subproblem}
\label{subsec:proxTREX}
We first note that the data fitting term of the TREX subproblem 
\eqref{eq:convexTREX} is the special case of \eqref{ekIjiiIus3W9-24t} 
where $\HH=\RR^p$, $\GG=\RR^n$, $q=2$, 
$L=X$, $r=z$, $u=X^\top x_j$, and $\rho=x_j^\top z$. Given
$\alpha\in\RPP$, the data fitting term of the TREX subproblem thus
assumes the form
\begin{equation}
\label{e:lscperspective3}
f_j\colon\RR^p\to\RX\colon b\mapsto
\begin{cases}
\Frac{\|X b-z\|_2^2}{\alpha x_j^\top(X b-z)},
&\text{if}\;\;x_j^\top(X b-z)>0;\\
0,&\text{if}\;\;X b=z;\\
\pinf,&\text{otherwise,}
\end{cases}
\end{equation}
and the corresponding TREX subproblem is to
\begin{equation}
\label{exevU74gdnx-02b}
\minimize{b\in\RR^p}{f_j(b)+\|b\|_1} \,.
\end{equation}
Now consider the linear transformation
\begin{equation}
\label{exevU74gdnx-02a}
M_j\colon\RR^p\to\RR\times\RR^n\colon b\mapsto
\big(x_j^\top X b,X b\big)
\end{equation}
and introduce
\begin{equation}
\label{exevU74gdnx-02c}
g_j\colon\RR\times\RR^n\to\RX\colon(\eta,y)\mapsto
\begin{cases}
\Frac{\|y-z\|_2^2}{\alpha\big(\eta-x_j^\top z\big)},
&\text{if}\;\;\eta>x_j^\top z; \\
0,&\text{if}\;\;y=z\;\text{and}\;\eta=x_j^\top z;\\
\pinf,&\text{otherwise.}
\end{cases}
\end{equation}
Then $f_j=g_j\circ M_j$. Upon setting $h=\|\cdot\|_1$, we see that
\eqref{exevU74gdnx-02b} is of the form 
\begin{equation}
\label{exevU74gdnx-02c2}
\minimize{b\in\RR^p}{g_j(M_j b)+h( b)}.
\end{equation}
Next, we determine the proximity operators $\prox_{g_j}$ 
and $\prox_h$, as only those are needed in modern proximal
splitting methods \cite{MaPr16,Siop15} to solve 
\eqref{exevU74gdnx-02c2}.
The proximity operator $\prox_h$ is the standard soft thresholding 
operator. A formula for
$\prox_{g_j}$ is provided by Example~\ref{exevU74gdnx-12a} up to
a shift by $(x_j^\top z,z)$. Let $\gamma\in\RPP$ and let $g$ be as
in \eqref{eevU74gdnx-12x}.
Combining Example~\ref{exevU74gdnx-12a} and 
\cite[Proposition~23.29(ii)]{Livre1}, we obtain, for every
$\eta\in\RR$ and every $y\in\RR^n$, 
\begin{align}
\label{exevU74gdnx-02f}
\prox_{\gamma g_j}(\eta,y)
&=(x_j^\top z,z)+\prox_{\gamma g_j}
\big(\eta-x_j^\top z,y-z\big)\nonumber\\
&=\begin{cases}
\big(\eta+\alpha\gamma\|p\|_2^2/4,y-\gamma p\big),
&\text{if}\;\;
4\gamma(\eta-x_j^\top z)+\alpha\|y-z\|_2^2>0;\\
\big(x_j^\top z,z\big),&\text{if}\;\;
4\gamma(\eta-x_j^\top z)+\alpha\|y-z\|_2^2\leq 0,
\end{cases}
\end{align}
where
\begin{equation}
\label{eDACy47Hj-13X}
p=
\begin{cases}
\dfrac{t}{\|y-z\|}(y-z),&\text{if}\;\;y\neq z;\\
0,&\text{if}\;\;y= z,
\end{cases}
\end{equation}
and where $t$ is the unique solution in $\RPP$ to the depressed cubic
equation 
\begin{equation}
\label{eDACy47Hj-13p}
s^{3}+\frac{4\alpha(\eta-x_j^\top z)+8\gamma}{\alpha^2\gamma}s
-\frac{8\|y-z\|}{\alpha^2\gamma}=0.
\end{equation}

\subsubsection{Proximal operators for generalized TREX estimators}
\label{subsec:proxGTREX}
Thus far, we have shown that the data-fitting function in the 
TREX subproblem \eqref{eq:convexTREX} is a special case
of \eqref{ekIjiiIus3W9-24t}. However, the full potential of
\eqref{ekIjiiIus3W9-24t} is revealed by taking a general 
$q\in\left]1,\pinf\right[$, leading to the composite perspective 
function 
\begin{equation}
\label{eevU74gdnx-14a}
f_{j,q}\colon\RR^p\to\RX\colon b\mapsto
\begin{cases}
\Frac{\|X b-z\|_2^q}{\alpha \big|x_j^\top(X b-z)\big|^{q-1}},
&\text{if}\;\;x_j^\top(X b-z)>0;\\
0,&\text{if}\;\;X b=z;\\
\pinf,&\text{otherwise}.
\end{cases}
\end{equation}
This function is the
data fitting term of a generalized TREX subproblem for the
corresponding global generalized TREX objective
\begin{align}
\label{eq:gtrex}
\minimize{b\in\RR^p}{\frac{\|Xb-z\|_2^q}
{\alpha\|X^\top(Xb-z)\|^{q-1}_\infty} + \| b\|_1}.
\end{align}
This objective function provides a novel family of generalized TREX
estimators, parameterized by $q$. The first important observation
is that, in the limiting case $q\to 1$, the  generalized TREX
estimator collapses to the Sqrt-Lasso \eqref{e:sqrtlasso}.
Secondly, particular choices of $q$ allow very efficient
computation of proximity operators for the generalized TREX
subproblems. Considering the linear transformation
$M_j\colon\RR^p\to\RR\times\RR^n\colon b\mapsto
\big(x_j^\top X b,X b\big)$
and introducing 
\begin{equation}
\label{eevU74gdnx-14d}
g_{j,q}\colon\RR\times\RR^n\to\RX\colon(\eta,y)\mapsto
\begin{cases}
\Frac{\|y-z\|_2^q}{\alpha\big|\eta-x_j^\top z\big|^{q-1}},
&\text{if}\;\;\eta>x_j^\top z; \\
0,&\text{if}\;\;y=z\;\text{and}\;\eta=x_j^\top z;\\
\pinf,&\text{otherwise}
\end{cases}
\end{equation}
we arrive at $f_{j,q}=g_{j,q}\circ M_j$. Setting
$h=\|\cdot\|_1$ the corresponding problem is to
\begin{equation}
\label{exk998JuY4-07}
\minimize{b\in\RR^p}{g_{j,q}(M_j b)+h( b)}.
\end{equation}
The proximity operator $\prox_{g_{j,q}}$ is provided by
Example~\ref{exevU74gdnx-10a}, where $\delta=0$ and $v=0$, up 
to a shift by $(x_j^\top z,z)$.
Let $g$ be the function in \eqref{eevU74gdnx-10a} and
let $\gamma\in\RPP$. Set $q^*=q/(q-1)$, set 
$\varrho=(\alpha(1-1/q^*))^{q^*-1}$, and take 
$(\eta,y)\in\RR\times\GG$. 
If $q^*\gamma^{q^*-1}(\eta-x_j^\top z)+\varrho\|y-z\|_2^{q^*}>0$
and $y\neq z$, let $t\in\RPP$ be the unique solution to the
polynomial equation
\begin{equation}
\label{eevU74gdnx-14f}
s^{2q^*-1}+\frac{q^*(\eta-x_j^\top z)}{\gamma\varrho}s^{q^*-1}
+\frac{q^*}{\varrho^2}s-\frac{q^*\|y-z\|}{\gamma\varrho^2}=0.
\end{equation}
Set
\begin{equation}
\label{eevU74gdnx-14x}
p=
\begin{cases}
\dfrac{t}{\|y-z\|}(y-z),&\text{if}\;\;y\neq z;\\
0,&\text{if}\;\;y= z.\\
\end{cases}
\end{equation}
Then we derive from Example~\ref{exevU74gdnx-10a} that
\begin{equation}
\label{eevU74gdnx-14h}
\prox_{\gamma g_{j,q}}(\eta,y)=
\begin{cases}
\big(\eta+\gamma\varrho t^{q^*}/q^*,y-\gamma p\big),
&\text{if}\;\;q^*\gamma^{q^*-1}(\eta-x_j^\top z)
+\varrho\|y-z\|_2^{q^*}>0;\\
\big(x_j^\top z,z\big),&\text{if}\;\;q^*\gamma^{q^*-1}
(\eta-x_j^\top z)+\varrho\|y-z\|_2^{q^*}\leq 0.
\end{cases}
\end{equation}
The key step in the calculation of the proximity operator is to
solve \eqref{eevU74gdnx-14f} efficiently. The solution
is explicit for $q=2$, as discussed in 
Example~\ref{exevU74gdnx-12a}.
For $q=3$, we obtain a quartic equation that can also be solved
explicitly. For $q\in\menge{(i+1)/i}{i\in\NN,\,i\geq 2}$
\eqref{eevU74gdnx-14f} is a polynomial with integer exponents and
is thus amenable to efficient root finding algorithms. For a
general $q$, a one-dimensional line search for convex functions on
a bounded interval needs to be performed.    

\subsubsection{Douglas-Rachford for generalized TREX subproblems}
Problem \eqref{exevU74gdnx-02c2} is a standard composite problem
and can be solved via several proximal splitting methods that
require only the ability to compute $\prox_{g_j}$ and $\prox_{h}$;
see \cite{Siop13} and references therein. For large scale
problems, one could also employ recent algorithms that benefit 
from block-coordinate \cite{Siop15} or asynchronous 
block-iterative implementations \cite{MaPr16},
while still guaranteeing the convergence of their sequence 
$(b_k)_{k\in\NN}$
of iterates to a solution to the problem. 
In this section, we focus on a simple implementation based on 
the Douglas-Rachford splitting method \cite{Livre1} in the context 
of the generalized TREX estimation to illustrate the applicability 
and versatility of the tools presented in Sections~\ref{sec:2} 
and~\ref{sec:3}.

Define $F\colon(b,c)\mapsto h(b)+g_{j,q}(c)$
and $G=\iota_V$, where $V$ is the graph of $M_j$, i.e., 
$V=\menge{(b,c)\in\RR^p\times\RR^{n+1}}{M_jb=c}$. 
Then we can rewrite
\eqref{exevU74gdnx-02c2} as
\begin{equation}
\label{exevU74gdnx-03a}
\minimize{\boldsymbol{x}=(b,c)\in\RR^p\times\RR^{n+1}}
{F(\boldsymbol{x})+G(\boldsymbol{x})}
\end{equation}
Let $\gamma\in\RPP$, let $\boldsymbol{y}_0\in\RR^{p+n+1}$, and let 
$(\mu_k)_{k\in\NN}$ be a sequence
in $\left]0,2\right[$ such that $\inf_{k\in\NN}\mu_k>0$ and 
$\sup_{k\in\NN}\mu_k<2$. The Douglas-Rachford algorithm is
\begin{equation}
\label{e:mer-egee07-15}
\begin{array}{l}
\text{for}\;k=0,1,\ldots\\
\left\lfloor
\begin{array}{l}
\boldsymbol{x}_{k}=\prox_{\gamma{G}}\boldsymbol{y}_k\\
\boldsymbol{z}_{k}=\prox_{\gamma{F}}
(2\boldsymbol{x}_{k}-\boldsymbol{y}_k)\\
\boldsymbol{y}_{k+1}=\boldsymbol{y}_k+
\mu_k(\boldsymbol{z}_{k}-\boldsymbol{x}_{k}).
\end{array}
\right.\\[2mm]
\end{array}
\end{equation}
The sequence $(\boldsymbol{x}_k)_{k\in\NN}$ is guaranteed to 
converge to a solution to \eqref{exevU74gdnx-03a} 
\cite[Corollary~27.4]{Livre1}. Note that
\begin{equation}
\label{e:9hfe6-03e}
\prox_{F}\colon (b,c)\mapsto(\prox_{h}b,\prox_{g_{j,q}}c)
\end{equation}
and, in view of \eqref{e:jjm4}, 
\begin{equation}
\label{e:9hfe6-03f}
\prox_{G}\colon (b,c)\mapsto (v,M_jv),
\quad\text{where}\;\;
v=b-M_j^\top\big(\Id+M_jM_j^\top\big)^{-1}(M_jb-c)
\end{equation}
is the projection operator onto $V$. Hence,  
upon setting $R_j=M_j^\top(\Id+M_jM_j^\top)^{-1}$, 
$\boldsymbol{x}_k=(b_k,c_k)\in\RR^p\times\RR^{n+1}$,
$\boldsymbol{y}_k=(x_k,y_k)\in\RR^p\times\RR^{n+1}$, and
$\boldsymbol{z}_k=(z_k,t_k)\in\RR^p\times\RR^{n+1}$,
we can rewrite \eqref{e:mer-egee07-15} as
\begin{equation}
\label{e:9hfe6-03g}
\begin{array}{l}
\text{for}\;k=0,1,\ldots\\
\left\lfloor
\begin{array}{l}
q_k=M_jx_k-y_k\\
b_{k}=x_k-R_jq_k\\
c_{k}=M_jb_{k}\\
z_{k}=\prox_{\gamma{h}}(2b_k-x_k)\\
t_{k}=\prox_{\gamma{g_{j,q}}}(2c_k-y_k)\\
x_{k+1}=x_k+\mu_k(z_k-b_k)\\
y_{k+1}=y_k+\mu_k(t_k-c_k).
\end{array}
\right.\\[2mm]
\end{array}
\end{equation}
Then $(b_k)_{k\in\NN}$ converges to a solution $b$ to
\eqref{exevU74gdnx-02c2} or \eqref{exk998JuY4-07}. 
Note that the matrix $R_j$ needs to be precomputed only once by
inverting a positive definite symmetric matrix.

\subsection{Numerical illustrations}
We illustrate the convergence behavior of the Douglas-Rachford 
algorithm for TREX problems and the statistical performance of
generalized TREX estimators using numerical experiments. All
presented algorithms and experimental evaluations are implemented
in MATLAB and are available at 
\url{http://github.com/muellsen/TREX}. All
algorithms are run in MATLAB 2015a on a MacBook Pro with 2.8 GHz
Intel Core i7 and 16 GB 1600 MHz DDR3 memory. 

\subsubsection{Evaluation of the Douglas-Rachford scheme on 
TREX subproblems}
We first examine the scaling behavior of the Douglas-Rachford
scheme for the TREX subproblem on linear regression tasks. We
simulate synthetic data according to the linear model
\eqref{e:linmodel} with
$m=20$ nonzero variables, regression vector
$b^{*}=[-1,1,-1,\ldots, 0_{p-m}^\top]^\top$, and feature vectors
$X_{i:}\sim N(0,\Sigma)$ with $\Sigma_{ii}=1$ and
$\Sigma_{ij}=0.3$, and Gaussian noise $\varepsilon_i\sim
N(0,\sigma^2)$ with $\sigma=1$. 
Each column $X_{:j}$ is normalized to have norm $\sqrt{n}$. 
We fix the sample size $n=200$ and consider the dimension
$p\in\{20,50,100,200,500,1000,2000\}$. We solve one standard TREX
subproblem (for $s\in\{-1,1\}$, $X_{:1}$, $\alpha=0.5$) over
$d=20$ random realizations of $X$ and $e$. 
For the TREX subproblem 
we consider the proximal Douglas-Rachford algorithm
\ref{e:9hfe6-03g} with parameters $\mu_k\equiv 1.95$ and
$\gamma=70$. We declare that the
Douglas-Rachford algorithm has converged 
at iteration $K$ if $\min\{\|b_{K+1}-b_{K}\|,\|
y_{K+1}-y_{K}\|\}\leq 10^{-10}$, 
resulting in the final estimate $b_K$.

\begin{figure}
\centering
\includegraphics[width=0.75\linewidth]{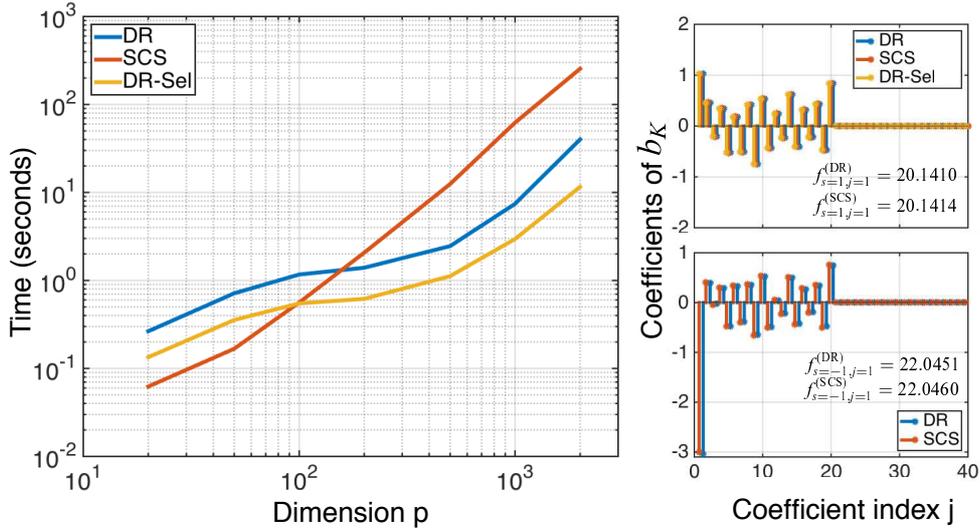}
\caption{Left panel: Average wall-clock time (seconds) versus
dimension $p$ for solving the TREX subproblems with
Douglas-Rachford (DR), SCS, and DR-Sel (Douglas-Rachford with
online sign selection). Right panel: Both plots show the first 40
variables of
a typical $p=2000$ TREX solution (top for $s=+1$). The 
$m=20$ first indices are the non-zero indices in $b^{*}$. Insets
show the
TREX subproblem objective function values for $s={\pm 1}$ and
$X_{:1}$, reached by Douglas-Rachford and SCS. DR-Sel selects the
correct signed subproblem as verified a posteriori by the minimum
function value
($f^\text{(DR)}_{s=1,j=1}=20.1410$ versus
$f^\text{(DR)}_{s=-1,j=1}=22.0451$). 
\label{fig:scaling}}
\end{figure}

In practice, the Douglas-Rachford algorithm for the TREX 
subproblem can be enhanced by an online sign selection rule 
(DR-Sel). When a TREX subproblem for fixed $X_{:j}$ is considered, 
we can solve the problem for $s\in\{-1,1\}$ concurrently for a
small number $k_0$ of iterations
(standard setting $k_0=50$) and select the signed optimization
problem with best progress in terms of objective function value.

We compare the run time scaling and solution quality of
Douglas-Rachford and
DR-Sel with those of the state-of-the-art Splitting Conic Solver
(SCS). SCS is a general-purpose first-order proximal method that
provides numerical solutions to several standard classes of 
optimization problems, including
SOCPs and Semidefinite Programs (SDPs). We use SCS in indirect
mode \cite{ODon2016} to solve the SOCP formulation of the TREX
subproblem \cite{Bien2016} with convergence tolerance
$10^{-4}$. 

The run time scaling results are shown in Figure~\ref{fig:scaling}.
We emphasize that the scaling experiments are not meant to measure
absolute algorithmic performance but rather efficiency with respect
to optimization formulations that are subsequently solved by
proximal algorithms. We observe that SCS with the SOCP
formulation of TREX compares favorably with Douglas-Rachford and
DR-Sel in low dimensions while, for $p>200$, both Douglas-Rachford
variants perform better. DR-Sel outperforms Douglas-Rachford by a
factor of $2$ to $4$ and always selects the correct signed
subproblem (data not shown). The TREX solutions found by SCS and
Douglas-Rachford are close in terms of $\| b^{(DR)}-b^{(SCS)}\|$, 
with DR typically reaching slightly lower function values
than SCS. Values for the first 40 dimensions of a typical 
solution $b_{K}$ in $p=2000$ dimensions are 
shown in Figure~\ref{fig:scaling} (right panels). 

\subsubsection{Behavior of generalized TREX estimators}

We next study the effect of the exponent $q$ on the statistical
behavior of the generalized TREX estimator. We use the synthetic
setting outlined in \cite{Wain2009} to study the phase transition
behavior of the different generalized TREX estimators. We generate
data from the linear model \eqref{e:linmodel} with $p=64$ and 
$m=\lceil 0.4 p^{3/4} \rceil$ nonzero variables, regression vector
$b^*=[-1,1,-1,\ldots, 0_{p-m}^\top]^\top$, and feature vectors
$X_{i:}\sim N(0,\Sigma)$ with $\Sigma_{ii}=1$ and $\Sigma_{ij}=0$ and
Gaussian noise $e$ with $\sigma=0.5$. Each column $X_{:j}$ is
normalized to have norm $\sqrt{n}$. We define the rescaled
sample size according to $ \theta(n, p, m)=n / (2 m \log{(p-m)})$
and consider $\theta(n,p,m)\in\{0.2,0.4,\ldots,1.6\}$.  At
$\theta(n, p,m)=1$, the probability of exact recovery of the
support of $b^*$ is $0.5$ for the (Sqrt)-Lasso with oracle
regularization parameter \cite{Wain2009}. We consider the
generalized TREX with different exponents $q\in\{9/8,7/6,3/2,2\}$
and the Sqrt-Lasso as limiting case $q=1$. For all generalized TREX
estimators we consider regularization parameters
$\alpha\in\{0.1,0.15,\ldots,2\}$. For Sqrt-Lasso we consider the
standard regularization path setting outlined in \cite{Ndia16}. We
solve all generalized TREX problems with the Douglas-Rachford
scheme using the previously described parameter and convergence
settings. We measure the probability of exact support recovery and
Hamming distance to the true support over $d=12$
repetitions. We threshold all ``numerical zeros" in the generalized
TREX solutions vectors at level $0.05$. For all solutions closest
to the true support in terms of Hamming distance, we also 
calculate estimation error $\|b_K-b^*\|_2^2/n$ and 
prediction error $\|Xb_K-Xb^*\|_2^2/n$. 
Figure~\ref{fig:phaseTrans} shows average
performance results across all repetitions. 
\begin{figure}[ht] 
\centering
\includegraphics[width=0.9\linewidth]{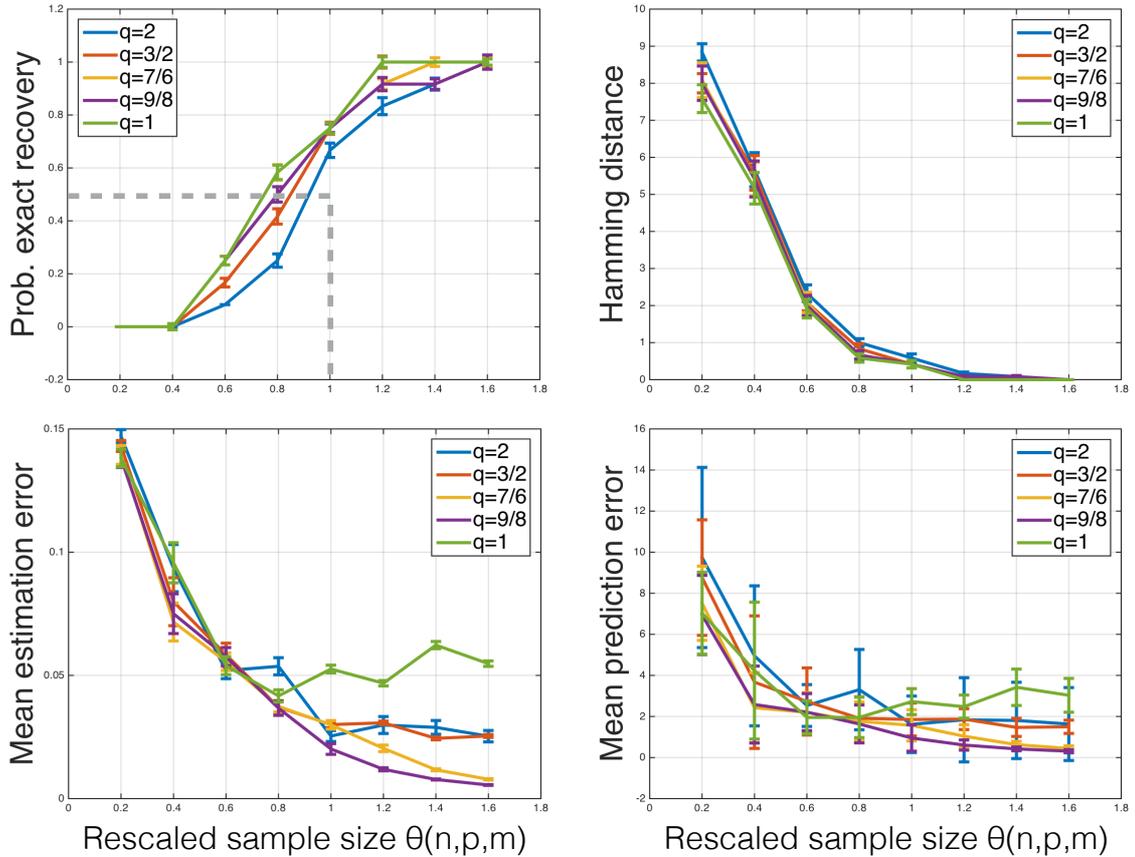}
\caption{Top row: Probability (and standard error) of exact support
recovery versus rescaled sample size $\theta(n, p, m)$ for
generalized TREX with $q\in \{1,9/8,7/6,3/2,2\}$; top right
panel: Average Hamming distance to true support. Bottom row: Mean
estimation error $\|b_K-b^*\|_2^2/n$ (left
panel) and mean prediction error $\|X b_K-X b^*\|_2^2/n$ 
(right panel). 
\label{fig:phaseTrans}}
\end{figure}
We observe several interesting phenomena for the family of
generalized TREX estimators. In terms of exact recovery, the
performance is slightly better than predicted by theory (see gray
dashed line in Figure~\ref{fig:phaseTrans} top left panel), with
decrease in performance for increasing $q$. This is also consistent
with average Hamming distance measurements (top right panel). We
observe that generalized TREX oracle solutions (according to the
minimum Hamming distance criterion) show best performance in terms
of estimation and prediction error for exponents $q\in\{9/8,7/6\}$,
followed by $q\in\{3/2,2\}$. 

The present numerical experiments highlight the usefulness of the
family of generalized TREX estimators for sparse linear regression
problems. Further theoretical research is needed to derive
asymptotic properties of generalized TREX. A central prerequisite
for establishing generalized TREX as statistical estimator is to
solve the underlying optimization problem with provable guarantees.
We have shown that our perspective function framework along with
efficient computation of proximity operators enables this important
task in a seamless way.

\paragraph{Acknowledgement.} We thank Dr. Jacob Bien for valuable
discussions. The Simons Foundation is acknowledged for partial
financial support of this research. The work of P. L. Combettes was
also partially supported by the CNRS MASTODONS project under grant 
2016TABASCO.

\end{document}